\newtheorem{theorem}{Theorem}[section]
\newtheorem{lemma}[theorem]{Lemma}
\theoremstyle{definition}
\newtheorem{remark}{Remark}[section]
\numberwithin{equation}{section}
\newcommand{\Rn}{\mathbb{R}^n}
\newcommand{\vp}{v}
\newcommand{\bfR}{\mathbb{R}}
\title[Instability of steady states]{Nonlinear instability of solutions in parabolic and hyperbolic diffusion}
\author[Stephen Pankavich and Petronela Radu]{}
\subjclass{Primary: 35B35, 35B05, 35B30; Secondary: 35L70, 35K55.}
 \keywords{evolution equations, sign-changing damping, instability, variable coefficients, steady states.}
 \email{pankavic@mines.edu}
 \email{pradu@math.unl.edu}
\thanks{The first author was supported in part by NSF grants DMS-0908413 and DMS-1211667.  The second author was supported in part by NSF grant DMS-0908435}
\begin{document}

\maketitle

\centerline{\scshape Stephen Pankavich}
\medskip
{\footnotesize
% please put the address of the first author
 \centerline{Department of Mathematics}
 \centerline{United States Naval Academy}
 \centerline{Annapolis, MD 21402} \ \\ 
 \centerline{Department of Applied Mathematics and Statistics}
 \centerline{Colorado School of Mines}
 \centerline{Golden, CO 80002}

} % Do not forget to end the {\footnotesize by the sign }

\medskip

\centerline{\scshape Petronela Radu}
\medskip
{\footnotesize
 % please put the address of the second  and third author
 \centerline{Department of Mathematics}
 \centerline{University of Nebraska, Lincoln,}
  \centerline{Lincoln, NE 68588}

}

\bigskip

% The name of the associate editor will be entered by an editorial staff
% "Communicated by the associate editor name" is not needed for special issue.
 \centerline{(Communicated by the associate editor name)}

\begin{abstract}
We consider semilinear evolution equations of the form $a(t)\partial_{tt}u + b(t) \partial_t u + Lu = f(x,u)$ and $b(t) \partial_t u + Lu = f(x,u),$ with possibly unbounded $a(t)$ and possibly sign-changing damping coefficient $b(t)$, and determine precise conditions for which linear instability of the steady state solutions implies nonlinear instability. More specifically, we prove that linear instability with an eigenfunction of fixed sign gives rise to nonlinear instability by either exponential growth or finite-time blow-up.  We then discuss a few examples to which our main theorem is immediately applicable, including evolution equations with supercritical and exponential nonlinearities.
\end{abstract}

\section{Introduction}

We consider the second order evolution equation
\begin{equation} \label{one}
a(t)\partial_{tt}u + b(t) \partial_t u + Lu = f(x,u), \qquad x \in \Omega, t > 0
\end{equation}
and its first-order counterpart
\begin{equation} \label{paraone}
b(t) \partial_t u + Lu = f(x,u), \qquad x \in \Omega, t > 0
\end{equation}
where $L$ is a linear differential operator with smooth, bounded coefficients, $f$ is a nonlinear source, $b(t)$ is a damping term, and $a(t)$ is a time dependent coefficient related to the relaxation time of the system (\ref{one}). We consider either $\Omega = \Rn$, or $\Omega \subset \Rn$ is bounded with smooth boundary.  In the case of a bounded spatial domain, we impose Dirichlet boundary conditions on functions in the domain of $L$ and their derivatives up to a suitable order.
An important question in the study of (\ref{one}) and (\ref{paraone}) is to understand the qualitative behavior of special types of solutions.  In the present context, we consider steady-state solutions $\vp \in C^2(\Omega)$ that satisfy
$$L\vp = f(x,\vp)$$
and determine specific conditions under which they are {\it nonlinearly} unstable if it has already been determined that they are {\it linearly} unstable.  

\subsection{Physical applications} If $L$ is an elliptic operator, then (\ref{one}) can be viewed as a model of hyperbolic diffusion with (\ref{paraone}) being its parabolic equivalent. There is a strong connection between the two models at the mathematical level (\cite{M,N,RTY2,RTY3}), as well as at the physical level since both have been proposed to model diffusion of heat or mass. Hyperbolic diffusion models such as (\ref{one}) have been proposed as a solution to eliminate the ``conduction paradox" which renders an unbounded propagation speed wave when the initial signal is perturbed (see \cite{RB} and the references within). This paradox is related to the infinite speed of propagation that is specific to parabolic models. To eliminate this unrealistic feature, one often replaces Fourier's (or Fick's) flux law with the Cattaneo-Vernotte law for the flux $q$ 
 \begin{equation}\label{CV}
 q+\tau_0 (t) \frac{\partial q}{\partial t}=-k\nabla \theta.
 \end{equation}
 Here $\theta$ is the temperature, $\tau_0(t)$ is the thermal relaxation that may be dependent on time, and $k$ is the thermal conductivity of the material. The above law was introduced in \cite{C,V} to describe unsteady heat conduction in phenomena such as the second sound of helium. Combining the standard conservation law 
 \[
 \theta_t + \text{div } q =0
 \]
 with the Cattaneo-Vernotte flux law yields 
   \[
 \theta_t-\text{div}(k \nabla \theta)+\tau_0(t) \theta_{tt}=0.
 \]
This is a generalization of the Maxwell-Cattaneo equation with the general description given by  
 \begin{equation}\label{MC}
a(t)u_{tt} + b u_t -\text{div}(k\nabla u) = 0.
\end{equation}
The connection between the two models will be clearly illustrated in our results, as we will show that the same mechanism applies to both the parabolic and hyperbolic settings when studying the instability of steady states. 

From the point of view of dynamics in elasticity, there has been a long interest in studying nonlinear, second-order evolution equations in the presence of {\it positive} damping. This interest is motivated by the need to determine the state of a physical system under an energy decreasing force, such as friction. However, systems with sign-changing damping are also important in applications as they appear in Aerodynamics, e.g.  the nose wheel shimmy of an airplane on which a hydraulic shimmy damper acts \cite{S}; Mesodynamics, as within a laser driven pendulum \cite{deS}; Quantum Field Theory, e.g. the Landau instability of Bose condensates \cite{KN}; and the macroscopic world, including a well-known model of suspension bridges \cite{LM}.  We refer the reader to \cite{FM} for more references arising in the physics and engineering literature. Our work applies to both classes of models, including positive as well as negative damping.

\subsection{Mathematical Applications}

The connection between the linear and nonlinear stability properties of evolution equations is a problem of long-standing interest within the study of partial differential equations.  In recent years, quite a bit of attention has been devoted to understanding this issue for semilinear evolution equations, with many articles devoted to second-order parabolic and hyperbolic equations. In particular, the stability and instability of steady solutions of nonlinear parabolic and wave equations has been investigated by a variety of authors including Grillakis, Shatah, and Strauss \cite{GSS}, Gui, Ni, and Wang \cite{GNW}, Souplet and Zhang \cite{SZ}, and Strauss and Karageorgis \cite{KS}, among many others.  
%For hyperbolic PDEs, it is also in general difficult to establish rigorous criteria for stability; for instance, it is well known that there are examples where linear instabilities are not linked to the spectrum of the linearized operator \cite{}. On the other hand, such results are not applicable to hyperbolic PDEs in which the principal term is nonlinear. 
Over the past few years, additional interest has arisen in understanding such  properties for evolution equations involving higher-order elliptic operators, such as the biharmonic operator $\Delta^2$ (see \cite{Kara} and references therein).  Similarly, the addition of variable coefficients is often problematic when applying previous methods. Though some techniques have been adapted from the second-order case to study higher-order operators, the majority of methods do not apply.  Additionally, the subtle effect of a change in domain or dimension of the problem can largely influence the associated stability properties of solutions to these equations.  In the current paper, we present results concerning the instability of steady solutions to evolution equations that involve very general conditions on the spatial operator $L$, the associated variable coefficients $a(t)$ and $b(t)$, the nonlinearity $f$, and the spatial domain $\Omega$.  Our assumptions allow for the application of these theorems to a number of specific cases, including $L = -\Delta$, $L = \sum_{i,j=1}^N \frac{\partial}{\partial x_i}  \left (a_{ij}(x) \frac{\partial}{\partial x_j} \right )$, and possibly $L = \Delta^m$ ($m \in \mathbb{N}$)  posed on the whole space $\Omega = \Rn$ or on bounded domains, with convex or concave nonlinearities, and coefficients that reside in suitable $L^p$ spaces. Our results apply to a wide class of initial data, including those for which the initial perturbations are assumed to be nonnegative.

In order to prove the main theorems, we will rely on a variant of Kaplan's eigenfunction method \cite{K} which was originally used to study the growth of solutions to quasilinear parabolic equations.  Other variants of this particular method have been used by several authors \cite{G, Kara, YZ} to study existence and uniqueness problems. More recently, this method was also utilized by Strauss and Karageorgis \cite{KS} to prove theorems regarding the instability of steady states for a family of nonlinear parabolic and hyperbolic equations.  For first-order evolution equations, Shatah and Strauss \cite{SS} have proved the extension of linearly unstable solutions to nonlinear instability under very general conditions. In the present study, we focus on arriving at similar results in the presence of time-dependent, and possibly sign-changing, coefficients for problems involving more general operators and posed on a variety of spatial domains.  One impetus for considering such coefficients arises from recent papers \cite{FM, FM1} in which it is shown that steady states of nonlinear wave equations with sign-changing damping coefficients are \emph{stable} under certain assumptions.  In contrast, we will prove that the analogous solutions with damping terms that allow for coefficients to change sign are \emph{unstable}.

To establish our instability result for second-order evolution equations, we show that the $L^2$ norm of the perturbed solution must grow exponentially in time. Notice that this is a much stronger result than just instability itself, as the statement implies that the solution exists in any given neighborhood of $\vp$, that it never returns to the region, and that it exits this neighborhood exponentially fast. For such problems, the $L^2$ norm lower bounds the natural energy norm for the system, and hence instability in $L^2$ implies unstable behavior of the energy.  Under stricter assumptions on the nonlinearity and eigenfunction, the instability can be shown to occur by blow-up, so that the $L^2$ norm of the perturbed solution must tend to infinity at some finite time. In the case of the first-order equation (\ref{paraone}), we make the additional assumption that the damping is necessarily positive, $b(t) > 0$ for all $t \in [0,\infty)$. This is imposed in order to ensure that the problem remains strictly parabolic and avoid any difficulty stemming from the widely-known ill-posedness of the backward diffusion equation.  With this condition, we prove instability in the $L^\infty$ norm by exponential growth or by blow-up.

In the special case that $a(t)\equiv 1$ and $b(t) = b$ is constant, $\Omega = \Rn$, and $L$ is second-order, similar instability questions have been previously studied.  More specifically, the instability (with a negative principal eigenvalue) of the steady state $\vp\in C^2$ of the linearized problems 
$$\partial_{tt}u + b \partial_t u + Lu  = \partial_uf(x,\vp)u, \qquad x \in \Rn,\, t > 0,$$ 
$$ \partial_t u + Lu = \partial_uf(x,\vp)u, \qquad x \in \Rn, t\, > 0 $$
was shown in \cite{KS} to imply the instability of $\vp$ as a solution to the corresponding nonlinear problems 
$$\partial_{tt}u + b \partial_t u + Lu = f(x,u), \qquad x \in \Rn, \,t > 0$$
$$\partial_t u + Lu = f(x,u), \qquad x \in \Rn,\, t > 0$$ for $f$ convex, but subject to a quadratic condition involving both the value of $b$ and the initial size of the perturbation under consideration.  

We point out that our results were inspired by the work of \cite{KS}, however the proofs of \cite{KS} cannot be extended to the variable coefficient case. To overcome this difficulty, we use continuity arguments that allow time-dependent coefficients for the damping, as well as for the second derivative term. Thus, while the main theorems of \cite{KS} required a condition on the constant $b$, our main theorems hold for any choice of time-dependent damping coefficient $b(t)$ which lies in a suitable $L^p$ space and $a(t)$ positive and bounded below. Since $b(t)$ is not necessarily of a fixed sign, one cannot utilize standard energy estimates to derive stability or instability results.  Additionally, the inclusion of variable coefficients is a non-trivial matter - one that cannot be settled by a simple change of variables, as no such transformation will convert this system into one with constant coefficients. In fact, the damping term $b(t) \partial_t u$ can have a significant impact on the behavior of solutions, even for the linear analogues of (\ref{one}) and (\ref{paraone}).  It is well known \cite{M, RTY2} that in the presence of damping, the asymptotic profile of solutions to the linear wave equation is well-behaved and resembles the Gaussian profile of solutions to the linear diffusion equation. Moreover, the difference between solutions of the wave and diffusion equations tends to zero as $t\to \infty$ faster than the decay of either solution on its own. This remarkable behavior is called the strong {\it diffusion phenomenon} and was shown to hold for a variety of systems \cite{HM, HL,N, RTY3}.  With the addition of the {\it time-dependent} coefficients in the nonlinear problem, the question arises as to whether steady states become unstable under smooth perturbations for more general evolution equations, and the present study is devoted to addressing this open question.

Regarding the growth of the damping coefficient for the second-order equation, note that a prototype that satisfies the assumptions outlined in our main theorems is 
 $$b(t) =b_0(1+t)^{-\alpha}, \quad \alpha >0.$$ 
On the other hand, we note that for coefficients $b(t)$ that behave like
\begin{equation}\label{agrw}
b(t)\sim (1+t)^{-\alpha}, \quad \alpha \in (0,1)
\end{equation} we know (see \cite{W} for example) that the total energy of the linearly damped system
\begin{equation}
\partial_{tt}u + b(t) \partial_t u -\Delta u = 0, \qquad x \in \Omega,\, t > 0
\end{equation}
decays to zero as $t\to \infty$. Since our work applies for the range of exponents $\alpha\in(0,1)$, we are able to identify conditions on the source term for which the inhomogeneous system becomes unstable, though its homogeneous counterpart remains stable.

More interestingly, perhaps, our result for the second-order evolution equation allows coefficients that may change sign in time. To our knowledge these are the first {\it instability} results in this direction, whereas the first stability results for sign-changing systems were reported recently for a nonlinear problem in \cite{FM} (based on the earlier work \cite{FM1}). The authors show in \cite{FM,FM1} that if the damping $b(t)$ is negative for a sufficiently small length of time, then steady solutions remain \emph{stable} for the system
\begin{equation}\label{dwe}
\partial_{tt}u + b(t) \partial_t u -\Delta u = f(u), \qquad x \in \Omega, t > 0
\end{equation}
where $\Omega \subset \Omega$ is bounded and $f(u)$ is absorbing with subcritical growth (i.e. $f(u) = - u|u|^{p-1}, 1\leq p \leq \frac{n}{n-2}$). In contrast, we will show in the applications section that for accretive forcing terms $f(u)=u|u|^{p-1}$ the system (\ref{dwe}) exhibits instability of steady states, with exponential blow-up, for all exponents $p$.

The paper is structured as follows.  In the next section, we prove the main instability theorems for both the first and second-order evolution equations.  Section $3$ then contains several lemmas used to prove Theorems \ref{T1} -- \ref{T2}. Finally, in Section $4$ we discuss a few examples of well-known problems to which our primary results are immediately applicable. Throughout, the quantity $C$ represents a generic constant that may change in value from line to line.

\section{Main results}

In this section, we prove the main nonlinear instability results regarding solutions of the steady equation $$L\vp = f(x,\vp) \qquad x \in \Omega.$$  We consider the initial value problem for the previously described second-order evolution equation with $x \in \Omega$ and $t > 0$:

\begin{equation} \label{two}
\left. \begin{gathered}
a(t)\partial_{tt}u + b(t) \partial_t u + Lu = f(x,u)\\
u(0,x) = \vp(x) + u_0(x)\\
\partial_t u(0,x) = u_1(x). \\
\end{gathered} \right \}
\end{equation}

The steady state $\vp$ is an exact solution of (\ref{two}) if $u_0 = u_1 \equiv 0$.  Thus, we are interested in the behavior of $u$ whenever $u_0$ and $u_1$ are small in some sense. Our results are subtly different for $b \in L^\infty$ and $b \in L^1$.  In short, we find instability for general damping coefficients $b \in L^1_{loc}$, instability by exponential growth for integrable coefficients $b \in L^1$, and instability by either exponential growth or blow-up for essentially bounded coefficients $b \in L^\infty$.

\begin{theorem}
\label{T1}% (2nd-order Equation)
Assume the following:
\begin{enumerate}
\item The adjoint linearized operator $L^* - \partial_u f(x,\vp)$ possesses a negative eigenvalue $\lambda_1 = -\sigma^2$ and corresponding nonnegative eigenfunction $\phi_1 \in L^2(\Omega)$
\item The damping term satisfies $b \in L^1_{loc}(0,\infty)$
\item The coefficient $(a \in C^1(0,T)$ and there exists $a_0>0$ such that $a(t) \geq a_0$ for all $t \in [0,\infty)$.
\item The nonlinearity $f(x,u)$ is $C^1$ and convex in $u$.
\end{enumerate}
Let $T \in (0,\infty]$ be given and assume the initial data $(u_0,u_1) \in H^1(\Omega) \times L^2(\Omega)$ satisfies $$\int_{\Omega} \phi_1(x) u_1(x) \ dx, \int_{\Omega} \phi_1(x) u_0(x) \ dx > 0.$$  If $u$ is a solution of (\ref{two}) such that $u - \vp \in C^1([0,T); L^2(\Omega))$, then we have the following:

\renewcommand{\labelenumi}{\alph{enumi}.}
\begin{enumerate}
\item There is $C_0 > 0$ such that for all $t \in (0,T)$, $$\Vert u(t) - \vp \Vert_2 > C_0\int_{\Omega} \phi_1(x) u_0(x) \ dx > 0.$$ 
\item Assume $ b \in L^\infty(0,T)$ and $a'(t)>0$. If the initial data also satisfies 
$$\int_{\Omega} \phi_1(x) u_1(x) \ dx \geq \left (-B + \sqrt{+\frac{\sigma^2}{a(0)}+B^2}  \right ) \int_{\Omega} \phi_1(x) u_0(x) \ dx$$ 
with $B:=\frac{ \Vert b \Vert_\infty}{a_0},$ then there exist $T_0, C_1>0$ and a positive, increasing function $\mu(t)$ such that for all $t \in (0,T)$ $$\Vert u(t) - \vp \Vert_2 \geq C_1e^{\mu(t)}.$$
The function $\mu$ is given by $\mu(t)=\int_0^t (-B+\sqrt{\frac{\sigma^2}{a(s)}+B^2})ds$ which satisfies $\mu(s)> C\int_0^t\frac{1}{a(s)}ds,$ for some $C>0$. 
\item If $ a,b \in L^\infty(0,T)$, then there exist $T_0, C_1,\mu >0$  such that for all $t \in (T_0,T)$ $$\Vert u(t) - \vp \Vert_2 \geq C_1e^{\mu t}.$$
\item If $ b \in L^1(0,\infty)$, and $ a\in L^{\infty}(0,\infty)$ then there exist $T_0, C_2, \mu > 0$ such that for all $t \in (T_0,T)$ $$\Vert u(t) - \vp \Vert_2 \geq C_2e^{\mu t}.$$
\end{enumerate}
\renewcommand{\labelenumi}{\arabic{enumi}.}
\end{theorem}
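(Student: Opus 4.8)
The plan is to run a variant of Kaplan's eigenfunction method on the projection $y(t):=\int_\Omega \phi_1(x)\bigl(u(t,x)-\vp(x)\bigr)\,dx$, where $\phi_1\ge 0$ is the eigenfunction of assumption~(1). Writing $w:=u-\vp$, subtracting $L\vp=f(x,\vp)$ from (\ref{two}), pairing against $\phi_1$, integrating by parts (the boundary terms vanish by the Dirichlet conditions, resp.\ by decay on $\Rn$), and using $L^{*}\phi_1=\partial_u f(x,\vp)\phi_1-\sigma^2\phi_1$ gives
\[
a(t)\,y''(t)+b(t)\,y'(t)-\sigma^2 y(t)=\int_\Omega \phi_1(x)\bigl[f(x,u)-f(x,\vp)-\partial_u f(x,\vp)\,w\bigr]\,dx .
\]
Convexity of $f$ in $u$ makes the integrand nonnegative pointwise, and $\phi_1\ge 0$ then forces the right-hand side to be $\ge 0$, so
\[
a(t)\,y''(t)+b(t)\,y'(t)-\sigma^2 y(t)\ge 0,\qquad y(0)>0,\quad y'(0)>0 .
\]
By Cauchy--Schwarz, $y(t)\le\|\phi_1\|_2\,\|u(t)-\vp\|_2$, so any lower bound on $y$ transfers to $\|u(t)-\vp\|_2$ with $C_0=\|\phi_1\|_2^{-1}$; the theorem is thus reduced to the scalar inequality above. (At the regularity $w\in C^1([0,T);L^2)$ the second-order term must be read in the integrated sense; the rigorous justification of this step is precisely what the lemmas of Section~3 provide.)

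\textbf{Part (a).} Since $a\ge a_0>0$ and $b\in L^1_{loc}$, the function $B_\ast(t):=\int_0^t b/a$ is well defined and absolutely continuous. On any subinterval of $(0,T)$ on which $y>0$ one has $\frac{d}{dt}\bigl(e^{B_\ast(t)}y'\bigr)\ge \sigma^2 e^{B_\ast(t)}y/a\ge 0$, hence $e^{B_\ast(t)}y'(t)\ge y'(0)>0$ there; so $y'>0$ and $y$ is strictly increasing on that subinterval. Starting from $y(0)>0$, a continuity argument then shows $y$ can never return to $0$, so $y>0$, $y'>0$ on all of $(0,T)$ and $y(t)>y(0)$ for $t>0$. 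This proves~(a) and, importantly, supplies the positivity of $y$ and $y'$ used below.

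\textbf{Parts (b)--(d): comparison with explicit subsolutions.} Using $y'>0$ and $a\ge a_0$ one has $-b(t)y'/a(t)\ge -By'$ with $B=\|b\|_\infty/a_0$, hence $y''+By'-\tfrac{\sigma^2}{a(t)}y\ge 0$. In each case I would compare $y$ with a subsolution $z=c\,e^{\rho(t)}$ of the matching linear operator and propagate the ordering $y\ge\epsilon z$ via the same integrating-factor-plus-continuity device applied to $d=y-\epsilon z$. For~(b) take $\rho=\mu$ with $\mu'(t)=-B+\sqrt{\sigma^2/a(t)+B^2}$: a short Riccati computation gives $z''+Bz'-\tfrac{\sigma^2}{a}z=(\mu''-B\mu')z$, and $a'>0$ makes $\mu'(t)>0$ decreasing, so $\mu''\le B\mu'$ and $z$ is a genuine subsolution; the extra data hypothesis in~(b) is exactly the inequality $y'(0)\ge \mu'(0)\,y(0)$ needed to order $y\ge\epsilon e^\mu$ at $t=0$, which yields the bound for all $t\in(0,T)$, and the estimate $\mu(t)\ge C\int_0^t a(s)^{-1}\,ds$ follows by rationalizing $\mu'$ and using $a\ge a_0$. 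For~(c), with $a\le\|a\|_\infty$ one replaces $\sigma^2/a(t)$ by the constant $\sigma^2/\|a\|_\infty$ and takes $\rho(t)=r_+ t$, $r_+$ the positive root of $r^2+Br-\sigma^2/\|a\|_\infty=0$; no data hypothesis is available, but part~(a) gives $y(T_0),y'(T_0)>0$ for any $T_0\in(0,T)$, which fixes the comparison constant at $t=T_0$ and gives the bound for $t>T_0$. For~(d), $b\in L^1(0,\infty)$ and $a$ bounded make $\beta(t)=\int_0^t b/a$ bounded above and below, so $(e^\beta y')'\ge \tfrac{\sigma^2}{\|a\|_\infty}e^\beta y\ge c\,y$; integrating twice from $T_0$ reduces matters to $Y''\ge \tilde c\,Y$ for $Y=\int_{T_0}^t y$, from which exponential growth of $Y$ (and hence of $y=Y'$, using that $(Y')^2-\tilde c Y^2$ is nondecreasing) follows, giving the bound for $t$ beyond some $T_0$.

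\textbf{Main obstacle.} The crux, and the reason the constant-coefficient arguments of \cite{KS} do not extend, is the simultaneous presence of a possibly sign-changing $b(t)$ and a coefficient $a(t)$ that is only bounded below: no energy identity is available and no change of variables reduces the system to constant coefficients, so the growth rate must be extracted directly from the raw differential inequality. Constructing the \emph{explicit} comparison exponent $\mu$ in part~(b) and verifying it is a true subsolution under the single structural assumption $a'>0$ is the technical heart of the proof; the low regularity of $y$ is a secondary issue, handled by interpreting the inequality weakly as in Section~3.
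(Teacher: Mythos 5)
Your reduction is exactly the paper's: project $w=u-\vp$ onto the nonnegative adjoint eigenfunction, use convexity and $L^{*}\phi_1=\partial_uf(x,\vp)\phi_1-\sigma^2\phi_1$ to obtain $a(t)W''+b(t)W'-\sigma^2W\ge 0$ with $W(0),W'(0)>0$, and transfer lower bounds on $W$ to $\Vert u(t)-\vp\Vert_2$ by Cauchy--Schwarz; your part (a) (integrating factor $e^{\int b/a}$ plus a continuity argument) is also the paper's Lemma \ref{L1}. Where you genuinely diverge is in the ODE growth lemmas. For (b) the paper (Lemma \ref{L2var}) substitutes $f=e^{Bt}W$, derives $f''\ge(\sigma^2/a+B^2)f$, multiplies by $f'$ and integrates, using $a'>0$ to discard the sign-definite integral term and conclude $f'\ge\sqrt{\sigma^2/a+B^2}\,f$; you instead exhibit $e^{\mu(t)}$ as an explicit subsolution, using $a'>0$ to get $\mu''\le 0\le B\mu'$. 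Both exploit monotonicity of $a$ and land on the identical $\mu$, but your route makes transparent why the data threshold is precisely $W'(0)\ge\mu'(0)W(0)$, while the paper's energy-multiplier argument avoids any comparison principle. For (c) your subsolution $e^{r_+t}$ is essentially equivalent to the paper's factorization $Z=Y'-\lambda_-Y$ in Lemma \ref{L2}. For (d) you differ most: the paper's Lemma \ref{L3} runs a bootstrap with the specific constant $\mu=(\tfrac{21c}{40a_1}e^{-\Vert b/a\Vert_1})^{1/2}$ and the elementary bound $e^x>\tfrac52 x$, whereas you integrate $(e^{\beta}W')'\ge cW$ once more and apply the standard $(Y')^2-\tilde cY^2$ monotonicity trick to $Y=\int_{T_0}^tW$; your version is arguably cleaner and yields the same conclusion. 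One point to tighten: propagating $d=W-\epsilon z\ge 0$ from the borderline data $d(0)=0$, $d'(0)\ge 0$ is not delivered by the bare ``integrating-factor-plus-continuity device,'' since at a first zero of $d$ with $d'=0$ the argument does not close; you need the extra Gronwall step (using $\sigma^2/a\le\sigma^2/a_0$ bounded) to exclude a dip below zero on a short interval. That is routine, and with it your proposal is a complete and correct alternative to the paper's Lemmas \ref{L2var}--\ref{L3}.
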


\begin{proof} Let $w(t,x) = u(t,x) - \vp(x)$ and define the function $$W(t) = \int_{\Omega} \phi_1(x) w(t,x) \ dx.$$ Using Cauchy-Schwarz, we have $$\vert W(t) \vert \leq \Vert \phi_1 \Vert_{L^2(\Omega)} \cdot \Vert w(t) \Vert_{L^2(\Omega)} = C \Vert w(t) \Vert_{L^2(\Omega)}. $$  Therefore, $W$ is well-defined as long as the $L^2$ norm remains finite and serves as a lower bound for $\Vert w(t) \Vert_2$.  Hence, we may focus on obtaining the necessary growth of $W$ to prove the desired result.

Using the linearity of the left side of the equation in (\ref{two}), we find that $w$ satisfies
\begin{equation}\label{diffsol}
 a(t)\partial_{tt} w + b(t) \partial_t w + Lw = f(x,w + \vp) - f(x,\vp)
 \end{equation}
in the sense of distributions.
Due to the convexity of the nonlinearity, we have $$ a(t)\partial_{tt} w + b(t) \partial_t w + \biggl ( L - \partial_u f(x,\vp) \biggr ) w \geq 0.$$ 
Since $\phi_1$ is nonnegative, we multiply the inequality by $\phi_1(x)$, integrate over $\Omega$, and integrate by parts to obtain
\begin{equation}
\label{Ineq1}
\begin{gathered} 
\int_{\Omega} \phi_1(x) a(t) \partial_{tt} w(t,x) \ dx + \int_{\Omega}\phi_1(x) b(t) \partial_t w(t,x) \ dx\\ +\int_{\Omega} w(t,x) \biggl [ L^* - \partial_u f(x,\vp) \biggr] \phi_1(x)   \ dx \geq 0. 
\end{gathered}
\end{equation}
The assumption on the adjoint operator implies $$ \biggl [ L^* - \partial_u f(x,\vp) \biggr] \phi_1 = - \sigma^2 \phi_1$$
and hence
\begin{equation} \label{three} \begin{gathered} 
a(t)\partial_{tt} \int_{\Omega} \phi_1(x) w(t,x) \ dx +  b(t) \partial_t  \int_{\Omega}\phi_1(x) w(t,x) \ dx\\ - \sigma^2 \int_{\Omega} w(t,x) \phi_1(x)   \ dx \geq 0. \end{gathered} \end{equation}
Because $\partial_t w$ is continuous, we see that $$W'(t) = \int_{\Omega} \phi_1(x) \partial_t w(t,x) \ dx$$ is continuous.  Thus, the differential inequality (\ref{three}) simplifies to become
\begin{equation}
\label{W}
a(t)W''(t)  + b(t) W'(t) - \sigma^2 W(t)  \geq 0
\end{equation}
in the sense of distributions. Since $a(t)\geq a_0>0$ the conclusions of the theorem follow from the lemmas of Section $3$.  More specifically, our assumption on the initial data yields $W(0), W'(0) > 0$.  Therefore, the first result of Lemma \ref{L1} proves
\begin{equation}\label{Wpos}
W(t) > W(0)
\end{equation}
for all $t  \in (0,T)$ and hence the conclusion of part (a).  Similarly, conclusions (b), (c), and (d) follow from the results of Lemmas \ref{L2var},  \ref{L2}, and \ref{L3}, respectively, regarding the behavior of functions $W(t)$ that satisfy (\ref{W}).
\end{proof}

We note that the convexity assumption on the nonlinearity was used in the proof to ensure positivity of the linear portion of the PDE.  If instead, the nonlinearity is concave in $u$, we may arrive at an analogous result with the necessary alterations on the initial data.
\begin{theorem}
\label{T1concave}
Assume the hypotheses of Theorem \ref{T1} hold, with the exceptions that $f(x,u)$ is concave in $u$ rather than convex, and the initial data satisfy $$\int_{\Omega} \phi_1(x) u_1(x) \ dx, \int_{\Omega} \phi_1(x) u_0(x) \ dx < 0.$$
Then, the conclusions of Theorem \ref{T1} remain valid.

\end{theorem}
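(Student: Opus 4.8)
The plan is to reduce Theorem~\ref{T1concave} to Theorem~\ref{T1} by a reflection about the steady state, exploiting the elementary fact that $u \mapsto 2\vp - u$ converts a concave nonlinearity into a convex one. Concretely, I would set $w = u - \vp$ and $W(t) = \int_{\Omega} \phi_1(x) w(t,x)\,dx$ exactly as in the proof of Theorem~\ref{T1}, so that $w$ satisfies (\ref{diffsol}) and $|W(t)| \leq \|\phi_1\|_2 \|w(t)\|_2 = C\|u(t) - \vp\|_2$. Since $f(x,\cdot)$ is now concave, its graph lies below each of its tangent lines, which gives the pointwise bound $f(x, w + \vp) - f(x,\vp) \leq \partial_u f(x,\vp)\, w$. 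Substituting this into (\ref{diffsol}) yields
\[
a(t)\partial_{tt} w + b(t)\partial_t w + (L - \partial_u f(x,\vp)) w \leq 0,
\]
which is precisely the reverse of the inequality obtained in the convex case. Multiplying by $\phi_1 \geq 0$, integrating over $\Omega$, integrating by parts, and using $(L^* - \partial_u f(x,\vp))\phi_1 = -\sigma^2 \phi_1$ then gives, in the sense of distributions,
\[
a(t) W''(t) + b(t) W'(t) - \sigma^2 W(t) \leq 0.
\]

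Next I would introduce $V(t) := -W(t) = \int_{\Omega} \phi_1(x)\,(\vp(x) - u(t,x))\,dx$. Negating the last inequality, $V$ satisfies $a(t)V'' + b(t)V' - \sigma^2 V \geq 0$ in the sense of distributions, i.e.\ exactly the differential inequality (\ref{W}); moreover $|V(t)| = |W(t)| \leq C\|u(t) - \vp\|_2$, and $V'$ is continuous because $\partial_t w$ is, so any lower bound obtained for $|V(t)|$ is automatically a lower bound for $\|u(t) - \vp\|_2$. The new sign hypotheses $\int_{\Omega} \phi_1 u_0\,dx < 0$ and $\int_{\Omega} \phi_1 u_1\,dx < 0$ translate into $V(0) = -\int_{\Omega}\phi_1 u_0\,dx > 0$ and $V'(0) = -\int_{\Omega}\phi_1 u_1\,dx > 0$, while the threshold hypothesis of part (b), read with the reversed inequality, becomes $V'(0) \geq (-B + \sqrt{\sigma^2/a(0) + B^2})\, V(0)$.

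From this point the argument is the proof of Theorem~\ref{T1} verbatim, with $V$ in place of $W$: since $a \geq a_0 > 0$, Lemma~\ref{L1} applied to $V$ gives $V(t) > V(0) > 0$ on $(0,T)$, hence conclusion (a); and Lemmas~\ref{L2var}, \ref{L2}, and~\ref{L3}, applied to $V$ under the respective assumptions on $a$ and $b$, produce the exponential lower bounds for $|V(t)|$ and therefore for $\|u(t) - \vp\|_2$, yielding conclusions (b), (c), and (d). Equivalently, one may perform the reflection at the level of the equation itself: $v := 2\vp - u$ solves $a(t)\partial_{tt} v + b(t)\partial_t v + Lv = g(x,v)$ with $g(x,v) := 2f(x,\vp(x)) - f(x, 2\vp(x) - v)$, and $g$ is $C^1$, convex in $v$, admits $\vp$ as a steady state, and has the same linearization $L - \partial_u f(x,\vp)$ at $\vp$; since $v(0) - \vp = -u_0$, $\partial_t v(0) = -u_1$, and $\|v(t) - \vp\|_2 = \|u(t) - \vp\|_2$, applying Theorem~\ref{T1} to $v$ gives the claim. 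I do not expect a genuine obstacle here; the only points demanding care are tracking the direction of the inequality through the integration by parts and checking that the part (b) threshold is the correctly reflected one, everything else being inherited from Section~3.
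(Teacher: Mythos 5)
Your proposal is correct and is essentially the paper's own argument: the paper likewise uses concavity to reverse the differential inequality and then works with $W(t) = -\int_{\Omega}\phi_1 w\,dx$, which is exactly your $V$, before invoking Lemmas \ref{L1}--\ref{L3}. Your alternative reflection $v = 2\vp - u$ is an equivalent repackaging of the same sign flip, not a genuinely different route.
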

\begin{proof}
Upon utilizing the concavity assumption to arrive at 
$$ a(t)\partial_{tt} w + b(t) \partial_t w + \biggl ( L - \partial_u f(x,\vp) \biggr ) w \leq 0,$$ 
the proof of the previous theorem is only altered by multiplying this by the negative eigenfunction $-\phi_1(x)$, thereby restoring the correct direction of the inequality
$$\begin{gathered} - \int_{\Omega} \phi_1(x) a(t)\partial_{tt} w(t,x) \ dx - \int_{\Omega} \phi_1(x) b(t) \partial_t w(t,x) \ dx\\ - \int_{\Omega} w(t,x) \biggl [ L^* - \partial_u f(x,\vp) \biggr] \phi_1(x)   \ dx \geq 0 \end{gathered} $$
as in (\ref{Ineq1}).  The behavior of the $L^2$ norm is then controlled by utilizing the quantity $W(t) = - \int_{\Omega} \phi_1(x) w(t,x) \ dx$ and we again arrive at 
$$a(t)W''(t)  + b(t) W'(t) - \sigma^2 W(t)  \geq 0$$
with initial data of the correct sign. The remainder of the proof again relies on the lemmas of the following section and is otherwise unchanged.
\end{proof}

Additionally, if we make further assumptions in the case of a convex nonlinearity, Theorem \ref{T1} can be extended to prove instability due to blow-up.

\begin{theorem}
\label{T1a}% (2nd-order Equation)
Let the hypotheses of Theorem (\ref{T1}) be valid and assume in addition 
\begin{enumerate}
\item The functions $f(x, \vp)$ and $\partial_u f(x, \vp)$ are bounded
\item There are $C > 0$ and $p > 1$ such that $f(x,u) \geq C \vert u\vert^p$ for every $(x,u) \in \Omega \times \mathbb{R}$
\item The eigenfunction and its product with the steady state are integrable, i.e. $ \phi_1, \vp \phi_1 \in L^1(\Omega).$
\end{enumerate}
Let $T \in (0,\infty]$ be given, and assume the coefficients $a(t)$ and $b(t)$ satisfy 
\[
0<a_0\leq a(t)\leq a_1 (t+1)^{r}, r\in [0,1), \quad b \in L^\infty(0,T). 
\]
If $u$ is a solution of (\ref{two}) such that $u - \vp \in C^1([0,T); L^2(\Omega))$, then there exists $T^* < \infty$ such that $$\lim_{t \uparrow T^*} \Vert u(t) - \vp \Vert_2 = +\infty.$$
\end{theorem}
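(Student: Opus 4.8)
The plan is to revisit the eigenfunction reduction from the proof of Theorem~\ref{T1}, but now retaining the full nonlinear term rather than only its convex minorant, so that hypothesis~(2) feeds a genuine power into the differential inequality for $W(t)=\int_\Omega\phi_1(x)w(t,x)\,dx$. Writing $w=u-\vp$ and using $[L^*-\partial_u f(x,\vp)]\phi_1=-\sigma^2\phi_1$, i.e.\ $L^*\phi_1=(\partial_u f(x,\vp)-\sigma^2)\phi_1$, multiplication of (\ref{diffsol}) by $\phi_1\ge 0$ and integration by parts gives the exact identity $a(t)W''(t)+b(t)W'(t)-\sigma^2 W(t)=\int_\Omega\phi_1(x)\bigl[f(x,w+\vp)-f(x,\vp)-\partial_u f(x,\vp)\,w\bigr]\,dx$. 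On the right I would bound $f(x,w+\vp)\ge C|w+\vp|^p$ by (2), absorb the bounded quantities $\phi_1 f(x,\vp)$ and $\phi_1\partial_u f(x,\vp)w$ using the boundedness in (1) and the integrability $\phi_1,\vp\phi_1\in L^1(\Omega)$ in (3), and apply Jensen's inequality to the probability measure $\Vert\phi_1\Vert_1^{-1}\phi_1\,dx$ to obtain $\int_\Omega\phi_1|w+\vp|^p\,dx\ge \Vert\phi_1\Vert_1^{1-p}\,|W(t)+V_0|^p$ with $V_0:=\int_\Omega\vp\phi_1\,dx$ finite. A Hölder/Young step, legitimate because $p>1$, lets the residual term $\int_\Omega\phi_1|w|\,dx$ be absorbed into the power term, leaving a closed differential inequality $a(t)W''(t)+b(t)W'(t)\ge c_0\,|W(t)+V_0|^p+\sigma^2 W(t)-C_5$ for some $c_0>0$, valid distributionally while $W$ remains finite.

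Next I would exploit the positivity already available. Since every hypothesis of Theorem~\ref{T1} holds, part~(a) of that theorem gives $W(t)>W(0)>0$; applying the integrating factor $e^{\beta(t)}$ with $\beta(t):=\int_0^t b(s)/a(s)\,ds$ to the weaker inequality $aW''+bW'-\sigma^2 W\ge 0$ shows $e^{\beta}W'$ is nondecreasing while $W\ge 0$, whence $W'(t)\ge e^{-\beta(t)}W'(0)>0$ and $W$ is strictly increasing. One then has to check that $W$ necessarily leaves every bounded set: if $W\nearrow L<\infty$, then for large $t$ the right side of the inequality is bounded below by a positive constant once $W$ is close enough to $L$ that $c_0|W+V_0|^p+\sigma^2 W-C_5>0$, and integrating twice while using $a(t)\le a_1(1+t)^{r}$ with $r<1$ forces $\int^\infty W'\,dt=\infty$, a contradiction. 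Consequently, once $W(t)$ passes the threshold at which $c_0|W+V_0|^p+\sigma^2 W-C_5\ge\tfrac{c_0}{2}W^p$, the inequality self-improves to $a(t)W''(t)+b(t)W'(t)\ge\tfrac{c_0}{2}W(t)^p$.

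From here I would run a Levine-type concavity argument adapted to the variable coefficient. Set $H(t)=W(t)^{-\alpha}$ with $0<\alpha<(p-1)/2$, so that $H>0$ and $\dot H<0$; combining $aW''+bW'\ge\tfrac{c_0}{2}W^p$ with the ($e^{\beta}$-weighted) energy functional $E(t)=\tfrac{a(t)}{2}W'(t)^2-\tfrac{c_0}{2(p+1)}W(t)^{p+1}$, one shows $\ddot H<0$ once $W$ is large, so $H$ is eventually concave and decreasing and therefore reaches $0$ at some finite $T^*<\infty$, i.e.\ $W(t)\to+\infty$ as $t\uparrow T^*$. The restriction $r\in[0,1)$ is exactly what is needed: it keeps $\int^\infty a(t)^{-1}\,dt$ (and $\int^\infty a(t)^{-1/2}\,dt$) divergent, so the growth of $a$ cannot stretch the blow-up time to infinity, while the possibly sign-changing damping $b$ is controlled by $e^{\beta}$, whose size is bounded by $e^{Bt}$ on bounded time intervals since $b/a\in L^\infty$. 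Finally, Cauchy--Schwarz together with $\phi_1\in L^2(\Omega)$ gives $\Vert u(t)-\vp\Vert_2=\Vert w(t)\Vert_2\ge\Vert\phi_1\Vert_2^{-1}|W(t)|$, so the $L^2$ norm blows up at the same finite $T^*$.

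I expect the decisive obstacle to lie entirely in this ODE analysis: unlike the exponential-growth regime handled in Theorem~\ref{T1}, one must show that the \emph{superlinear} differential inequality for $W$ produces \emph{finite-time} blow-up despite a coefficient $a(t)$ that may grow and a damping $b(t)$ that may change sign. The two points requiring real care are (i) guaranteeing that $W$, a priori only known to satisfy $W>W(0)>0$, actually enters the regime where the $W^p$ term dominates (here the constants $V_0$ and $C_5$ must be tracked), and (ii) verifying that the concavity/energy mechanism survives multiplication of $W''$ by $a(t)\le a_1(1+t)^r$ — the exponent $r<1$ is precisely the borderline where this works, while $r\ge 1$ would in general let the coefficient slow solutions enough to preclude blow-up. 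Everything else, namely the eigenfunction reduction and the Jensen/Young bookkeeping, is routine given the hypotheses and the machinery already set up for Theorem~\ref{T1}.
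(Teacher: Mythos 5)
Your reduction from the PDE to an ordinary differential inequality for $W(t)=\int_\Omega\phi_1 w\,dx$ is essentially the paper's: both start from $a W''+bW'-\sigma^2W\ge \int_\Omega\phi_1\bigl[f(x,w+\vp)-f(x,\vp)-\partial_uf(x,\vp)w\bigr]dx$, use hypotheses (1)--(3) to control the lower-order terms, and extract a $W^p$ lower bound. Two of your choices are legitimate (and arguably cleaner) variations: you use Jensen's inequality with respect to the probability measure $\Vert\phi_1\Vert_1^{-1}\phi_1\,dx$ to get $\int_\Omega\phi_1|w+\vp|^p\,dx\ge\Vert\phi_1\Vert_1^{1-p}|W+V_0|^p$ directly, whereas the paper goes through H\"older ($\alpha\ge C\beta^p$) and then must invoke the exponential growth from part (b) of Theorem \ref{T1} to argue that $\beta$, and hence the power term, eventually dominates; your self-contained argument that $W$ leaves every bounded set avoids that appeal to part (b), whose extra initial-data condition is not explicitly among the hypotheses of Theorem \ref{T1a}. (Your unboundedness argument does need one more line of care when $b$ changes sign: from $aW''+bW'\ge\delta$ one should observe that $W''>0$ whenever $W'<\delta/(2\Vert b\Vert_\infty)$, so $W'$ stays bounded below by a positive constant, rather than relying on the integrating factor $e^{\beta(t)}$, which only gives $W'\ge e^{-Bt}W'(0)$.)

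The genuine gap is in the final ODE step. The paper divides by $a(t)$, uses $a_0\le a(t)\le a_1(1+t)^r$ to obtain $W''+\frac{\Vert b\Vert_\infty}{a_0}W'\ge C(1+t)^{-r}W^p$, and then cites Lemma \ref{L4}, i.e.\ Proposition 3.1 of Todorova--Yordanov, for finite-time blow-up; that lemma is the nontrivial ingredient. Your replacement --- a Levine-type concavity argument with $H=W^{-\alpha}$ --- is only asserted: $\ddot H\le 0$ is equivalent to $WW''\ge(\alpha+1)(W')^2$, and this does \emph{not} follow from $aW''+bW'\ge\tfrac{c_0}{2}W^p$ alone, because the damping term $bW'$ (which can be large and of either sign) and the growing coefficient $a(t)$ both enter with the wrong sign when you try to close the energy inequality $\frac{d}{dt}\bigl[\tfrac{a}{2}(W')^2-\tfrac{c_0}{2(p+1)}W^{p+1}\bigr]\le$ (damping and $a'$ terms). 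Making a concavity/energy argument survive a damping term is precisely the difficulty that the cited lemma resolves (by an iteration rather than a pure concavity mechanism), so as written your sketch assumes the hardest part. Either carry out that ODE analysis in full --- tracking how $r<1$ enters --- or do as the paper does and reduce to the known blow-up result for $Y''+BY'\ge C(1+t)^{-r}Y^p$.
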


\begin{proof}
To prove the instability by blow-up, we follow \cite{KS} with modifications to adjust for the time-dependence in the damping term $b(t)$. First assume that $T = \infty$. Utilizing the additional assumptions and proceeding as before, the equation (\ref{two}) yields
\begin{eqnarray*}
a(t)\partial_{tt} w + b(t) \partial_t w + \left ( L - \partial_u f(x, \vp) \right ) w & = & f(x,w+  \vp) - f(x, \vp) - \partial_u f(x, \vp) w \\
& \geq & C \vert w +  \vp \vert^p - C (1 + \vert w \vert)
\end{eqnarray*}
As before, we multiply the inequality by the eigenfunction $\phi_1$, integrate, and use the assumption on the adjoint operator to find
\begin{equation}\label{Wineq}
a(t)W''(t) + b(t) W'(t) - \sigma^2 W(t) \geq C \int_{\Omega} \phi_1 \vert w +  \vp \vert^p \ dx - C \int_{\Omega} \phi_1 (1 + \vert w \vert) \ dx.
\end{equation}
Since $\phi_1, \phi_1 \vp \in L^1(\Omega)$, the last term in the inequality satisfies
\begin{eqnarray*} \int_{\Omega} \phi_1 (1 + \vert w \vert) \ dx & \leq & \int_{\Omega} \phi_1 \ dx + \int_{\Omega} \phi_1  \vp \ dx  + \int_{\Omega} \phi_1 \vert w +  \vp \vert\ dx\\
& \leq &   C \left ( 1 + \int_{\Omega} \phi_1 \vert w +  \vp \vert \ dx \right ) .
\end{eqnarray*}
Put $\alpha(t) = \int_{\Omega} \phi_1 \vert w +  \vp \vert^p \ dx$ and $\beta(t) =  \int_{\Omega} \phi_1 \vert w +  \vp \vert \ dx$.  Then, since we know from (\ref{Wpos}) that $W$ stays positive and $a(t)$ is positive, the differential inequality (\ref{Wineq}) implies
\begin{equation}
\label{diffineq}
a(t)W''(t) + b(t) W'(t) \geq C \left ( \alpha(t) - \beta(t) \right )
\end{equation}
for $t$ chosen large enough.  Using Holder's inequality we see that $\alpha(t)$ dominates $\beta(t)$ for large $t$, since for any $p \in (1,\infty)$
$$\beta(t) \leq \left ( \int_{\Omega} \phi_1 \ dx \right )^{\frac{p-1}{p}} \left ( \int_{\Omega} \phi_1 \vert w + \vp \vert^p \ dx \right )^\frac{1}{p} \leq C \alpha(t)^\frac{1}{p}$$ or $$\alpha(t) \geq C \beta(t)^p.$$  Additionally, notice that
$$ W(t) = \int_{\Omega} \phi_1 w \ dx \leq \beta(t) + \int_{\Omega} \phi_1  \vp \ dx \leq \beta(t) + C.$$ By the conclusion of part (b) of the theorem, $W$ grows exponentially fast. Therefore, for $t$ large we find that $\beta(t)$ must also grow exponentially fast, as it is effectively lower bounded by $W$. Using these facts in (\ref{diffineq}), we find 
 \begin{equation}\label{HP}
a(t) W''(t) + b(t) W'(t) \geq C\alpha(t) \geq C \beta(t)^p \geq C W(t)^p.
 \end{equation}
Since $b \in L^\infty(0,T)$ it follows that $b \in L_{loc}^1(0,T)$.  Hence, by conclusion (a), we know $W'$ is positive.  Dividing by $a(t)$ and using the bounds on the coefficients we have
 \[
 W''(t)+\frac{\Vert b \Vert_\infty }{a_0}W'(t)\geq W''(t)+\frac{b(t)}{a(t)}W'(t) \geq \frac{C}{a(t)} W(t)^p \geq C (t+1)^{-r} W(t)^p
 \]
for $t$ large enough. From Lemma \ref{L4}, we arrive at the desired conclusion.
\end{proof}

%\begin{remark}
%\label{R1}
%The assumptions on the initial data can be lessened in the case of $a \in L^\infty(0,T)$.  More specifically, the conclusion of the theorem remains valid under the assumption
%$$\int_{\Omega} \phi_1(x) u_1(x) \ dx + \frac{1}{2} \left ( \Vert a \Vert_\infty + \sqrt{ \Vert a \Vert_\infty^2 + 4\sigma^2} \right ) \int_{\Omega} \phi_1(x) u_0(x) \ dx > 0.$$
%The proof of this corollary follows from adapting the above argument to the methods of \cite{KS}, in which a similar condition is imposed on the constant damping coefficient.
%\end{remark}

Next, we present the analogue of Theorem \ref{T1} for the first-order evolution problem with  $x \in \Omega$ and $t > 0$:
\begin{equation} \label{paratwo}
\left. \begin{gathered}
 b(t) \partial_t u + Lu = f(x,u)\\
u(0,x) = \vp(x) + u_0(x).\\
\end{gathered} \right \}
\end{equation}
Unlike the second-order case our main theorem proves instability in the $L^\infty$ norm, rather than the $L^2$ norm.  In either situation, these norms are natural to the respective existence theory. In short, we find instability for $b \in L^1_{loc}$ and instability by either exponential growth or blow-up for $b \in L^\infty$. The precise statement of the theorem follows.

\begin{theorem}\label{T2} %(First-order evolution equation)

Assume the hypotheses of Theorem \ref{T1} and $b(t) > 0$ for all $t \in [0,\infty)$. 
Let $T \in (0,\infty]$ and the initial data $u_0 \in L^\infty(\Omega)$ satisfying
$$\int_{\Omega} \phi_1(x) u_0(x) \ dx > 0.$$ be given.  If $u$ is a solution of (\ref{paratwo}) on $[0, T)$ such that $u-\vp$ is $C^1$ and bounded for each $t$. Then, we have the following:
\renewcommand{\labelenumi}{\alph{enumi}.}
\begin{enumerate}
\item There is $C_0 > 0$ such that for all $t \in (0,T)$, $$\Vert u(t) - \vp \Vert_\infty > C_0\int_{\Omega} \phi_1(x) u_0(x) \ dx > 0.$$ 
\item If $b\in L^{\infty}(0,T)$, then there exist $T_0, C_1, \mu > 0$ such that for all $t \in (T_0,T)$ $$\Vert u(t) - \vp \Vert_\infty \geq C_1e^{\mu t}.$$
\item If $b\in L^{\infty}(0,T)$ and the assumptions of Theorem \ref{T1a} hold, then there exists $T^* < \infty$ such that $$\lim_{t \uparrow T^*} \Vert u(t) - \vp \Vert_\infty = +\infty.$$
\end{enumerate}
\renewcommand{\labelenumi}{\arabic{enumi}.}
\end{theorem}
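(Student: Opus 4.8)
The plan is to mirror the structure of the proof of Theorem \ref{T1}, reducing the PDE statement to a scalar differential inequality for the Kaplan functional, and then invoking the relevant lemmas from Section 3. First I would set $w(t,x) = u(t,x) - \vp(x)$ and $W(t) = \int_\Omega \phi_1(x) w(t,x)\, dx$. As before, Cauchy--Schwarz gives $|W(t)| \le \|\phi_1\|_2 \|w(t)\|_2$, but since $\phi_1 \in L^1(\Omega)$ (implicitly needed here; in part (c) this is granted by the hypotheses of Theorem \ref{T1a}, and for part (b) one uses $\|w(t)\|_\infty$ directly) we can bound $|W(t)| \le \|\phi_1\|_1 \|w(t)\|_\infty$, so that $W$ is a lower bound for the $L^\infty$ norm up to a constant, and it suffices to prove the growth or blow-up of $W$.

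Next I would subtract the steady-state equation, use convexity of $f$ to get $b(t)\partial_t w + (L - \partial_u f(x,\vp))w \ge 0$ in the distributional sense, multiply by the nonnegative eigenfunction $\phi_1$, integrate over $\Omega$, and integrate by parts (moving $L$ onto $\phi_1$ via $L^*$). Using $(L^* - \partial_u f(x,\vp))\phi_1 = -\sigma^2 \phi_1$, this collapses to the scalar differential inequality
\begin{equation}\label{Wfirst}
b(t) W'(t) - \sigma^2 W(t) \ge 0
\end{equation}
in the sense of distributions, where continuity of $\partial_t w$ guarantees $W'$ is continuous. Since $b(t) > 0$, dividing yields $W'(t) \ge \frac{\sigma^2}{b(t)} W(t)$. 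With $W(0) = \int_\Omega \phi_1 u_0 > 0$, a Gr\"onwall-type argument (or the analogue of Lemma \ref{L1}) shows $W(t) \ge W(0) \exp\!\left(\sigma^2 \int_0^t \frac{ds}{b(s)}\right) > W(0)$ for all $t \in (0,T)$, which gives part (a) after dividing by $\|\phi_1\|_1$. For part (b), when $b \in L^\infty(0,T)$ we have $\int_0^t \frac{ds}{b(s)} \ge t/\|b\|_\infty$, so $W(t) \ge W(0) e^{\mu t}$ with $\mu = \sigma^2/\|b\|_\infty$, giving exponential growth of $\|u(t)-\vp\|_\infty$; this is exactly the content of the relevant lemma applied to \eqref{Wfirst} (this is the first-order, $a \equiv 0$ specialization of the arguments behind Lemmas \ref{L2}--\ref{L3}).

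For part (c) I would bring in the assumptions of Theorem \ref{T1a}: $f(x,\vp), \partial_u f(x,\vp)$ bounded, $f(x,u) \ge C|u|^p$ with $p>1$, and $\phi_1, \vp\phi_1 \in L^1(\Omega)$. Proceeding as in the proof of Theorem \ref{T1a} but with the term $a(t)\partial_{tt}w$ absent, I get
\begin{equation}\label{Wfirstblow}
b(t) W'(t) - \sigma^2 W(t) \ge C\int_\Omega \phi_1 |w+\vp|^p\, dx - C\int_\Omega \phi_1(1 + |w|)\, dx,
\end{equation}
and the same H\"older argument ($\beta(t) \le \|\phi_1\|_1^{(p-1)/p} \alpha(t)^{1/p}$, i.e. $\alpha(t) \ge C\beta(t)^p$) together with $W(t) \le \beta(t) + C$ and the exponential lower bound from part (b) on $\beta(t)$ yields $b(t) W'(t) \ge C W(t)^p$ for $t$ large. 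Since $b \in L^\infty(0,T)$, dividing gives $W'(t) \ge \frac{C}{\|b\|_\infty} W(t)^p$, and since $p > 1$ this superlinear ODE forces $W$, and hence $\|u(t)-\vp\|_\infty$, to blow up at some finite $T^* < \infty$; this is where a finite-time-blow-up lemma (a one-sided comparison with $y' = cy^p$, the first-order analogue of Lemma \ref{L4}) is applied. The main obstacle is not any single estimate but rather justifying the integration-by-parts steps and the distributional differential inequalities under the relatively weak regularity $u - \vp \in C^1([0,T); L^2)$ (or bounded) and $b \in L^1_{loc}$ — in particular making precise that \eqref{Wfirst}--\eqref{Wfirstblow} hold in a sense strong enough to run the Gr\"onwall and comparison arguments; this is handled exactly as in the second-order case, since dropping the $a(t)\partial_{tt}w$ term only simplifies matters, and the positivity hypothesis $b(t) > 0$ is precisely what keeps the reduced inequality well-behaved.
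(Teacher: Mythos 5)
Your proposal matches the paper's proof essentially step for step: the same Kaplan functional $W(t)=\int_\Omega \phi_1 w\,dx$ bounded by $\|\phi_1\|_1\|w\|_\infty$, the same reduction via convexity and the adjoint eigenvalue to $b(t)W'-\sigma^2W\ge 0$, the same H\"older argument leading to $b(t)W'\ge CW^p$, and the same separable-ODE blow-up (the paper packages these as Lemmas \ref{L5} and \ref{L6}). The only cosmetic difference is in part (a), where the paper avoids dividing by $b$ — it deduces $W'>0$ directly from $b(t)W'\ge\sigma^2 W>0$ together with $b>0$ — which is slightly safer than your Gr\"onwall bound $W(t)\ge W(0)\exp\bigl(\sigma^2\int_0^t ds/b(s)\bigr)$, since $1/b$ need not be locally integrable when $b$ is merely positive and $L^1_{loc}$; but you flag the lemma-based alternative yourself, and parts (b) and (c) are unaffected.
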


\begin{proof}
As in the proof of Theorem \ref{T1}, we define
$$W(t) = \int_{\Omega} \phi_1(x) w(t,x) \ dx, \quad w(t,x) = u(t,x) - \vp(x),$$ 
for which we have the estimate
$$|W(t)|\leq \|\phi_1\|_{L^1(\Omega)}\cdot \|w\|_{L^{\infty}(\Omega)}.$$
So, as before, the behavior of $W$ will determine the growth of $w$, and hence the instability result. 
We apply the same arguments as those of Theorem \ref{T1}, and by imposing the convexity assumption on $f$ we see that $W$ must satisfy the analogue of (\ref{W}), namely
\begin{equation}\label{WP}
b(t)W'(t)-\sigma^2 W(t) \geq 0.
\end{equation}
Applying Lemma \ref{L5} gives the validity of the first two conclusions of the theorem.

To obtain the last part of the theorem we use the same estimates that yielded (\ref{HP}). The first-order counterpart is now given by the inequality
$$b(t)W'(t) \geq C W(t)^p.$$
By Lemma \ref{L5}(i) we have that $W$ is positive, hence we can directly apply Lemma \ref{L6} to obtain the blow up of $W$ in finite time.
\end{proof}
\noindent In the next section, we prove the exponential growth and blow-up lemmas for functions satisfying the ordinary differential inequalities that are utilized in our main theorems.

\section{Lemmas}

In this section, the lemmas used in the proof of Theorems \ref{T1} -- \ref{T2} are stated and proved. %We note that our methods of proof are quite different from those of previous studies, such as \cite{KS}. 
First, we consider the differential inequalities which arise from our study of the second-order evolution problem (\ref{two}), namely
\begin{equation}
\label{ODE}
a(t)Y''(t) + b(t)Y'(t) - cY(t) \geq 0
\end{equation}
and
\begin{equation}
\label{ODE2}
A(t)Y''(t) + BY'(t) - CY(t)^p \geq 0
\end{equation}
for $p \in (1,\infty)$.

\begin{lemma}
\label{L1}
Let $T \in (0,\infty]$, $a \in C^1(0,T)$ with $a(t) \geq a_0>0$ for all $t \in [0,T)$, $b \in L^1_{loc}(0,T)$, and $c > 0$ be given. Suppose $Y \in C^1$ satisfies (\ref{ODE})
on $[0,T)$ in the sense of distributions with given initial conditions $Y(0), Y'(0) > 0$. Then, $Y'(t) > 0$ for all $t \in [0,T)$.
\end{lemma}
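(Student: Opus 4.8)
The plan is to argue by contradiction. Suppose $Y'$ fails to remain positive on $[0,T)$; since $Y' \in C^0$ and $Y'(0)>0$, there is a first time $t_0 \in (0,T)$ with $Y'(t_0)=0$ and $Y'(t)>0$ for all $t \in [0,t_0)$. On this interval $Y$ is strictly increasing, so $Y(t) \geq Y(0) > 0$ throughout $[0,t_0]$. The idea is to test the differential inequality \eqref{ODE} against a suitable positive weight on $[0,t_0]$ to deduce that $Y'(t_0)$ must in fact be strictly positive, contradicting $Y'(t_0)=0$.

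The key computation is the following. Since $a \in C^1$ with $a \geq a_0 > 0$ and $b \in L^1_{loc}$, the function $\rho(t) := \exp\!\left(\int_0^t \frac{b(s)}{a(s)}\,ds\right)$ is well-defined, absolutely continuous, and strictly positive on $[0,T)$. Dividing \eqref{ODE} by $a(t) > 0$ gives $Y'' + \frac{b}{a} Y' - \frac{c}{a} Y \geq 0$ in the sense of distributions, and multiplying by $\rho > 0$ puts the first two terms into divergence form: $(\rho Y')' \geq \frac{c}{a}\rho\, Y \geq 0$ on $[0,t_0]$, where the final inequality uses $c > 0$, $a > 0$, $\rho > 0$, and $Y \geq Y(0) > 0$. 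Hence $\rho Y'$ is nondecreasing on $[0,t_0]$, so $\rho(t_0) Y'(t_0) \geq \rho(0) Y'(0) = Y'(0) > 0$, which forces $Y'(t_0) > 0$ and contradicts the definition of $t_0$. Therefore $Y'(t) > 0$ for all $t \in [0,T)$.

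I expect the only delicate point to be the justification of the distributional manipulations — namely that one may legitimately multiply the distributional inequality \eqref{ODE} by the $C^1$ (indeed only absolutely continuous, since $b$ is merely $L^1_{loc}$) positive weight $1/a$ and then by $\rho$, and conclude that $\rho Y'$ is monotone. Since $Y \in C^1$, the product $\rho Y'$ is continuous, and the inequality $(\rho Y')' \geq 0$ holds in the sense of distributions; a standard fact is that a continuous function whose distributional derivative is a nonnegative measure (here $\frac{c}{a}\rho Y\,dt$, a nonnegative absolutely continuous measure on $[0,t_0]$) is nondecreasing, which is exactly what is needed. One should also note at the outset that \eqref{ODE} holding in the sense of distributions together with $Y \in C^1$ already makes $a Y'' = (aY')' - a'Y'$ meaningful as a distribution, so each term in the manipulation is a well-defined distribution of order at most one; the multiplications by the Lipschitz-or-better functions $1/a$ and $\rho$ are then legitimate. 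Everything else is elementary.
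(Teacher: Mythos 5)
Your proof is correct and follows essentially the same route as the paper's: a continuity argument on the first interval where $Y'>0$ (so that $Y\geq Y(0)>0$ there), combined with the integrating factor $\rho(t)=e^{\int_0^t b/a}$ to conclude that $\rho Y'$ cannot decrease, hence $Y'$ cannot reach zero. The paper implements the monotonicity of $\rho Y'$ by testing against approximations of $e^{I(s)}\chi_{(T_0,T_1)}$ rather than invoking the fact that a continuous function with nonnegative distributional derivative is nondecreasing, but this is only a cosmetic difference.
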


\begin{proof}
To prove the first lemma, we shall take advantage of the regularity of $Y(t)$ and utilize a continuity argument.  Let $\bar{T}<T$ be given so that $\bar{T}<\infty$. Define $$T_1 = \sup \{t \in (0,\bar{T}):  Y'(s) > 0, \mathrm{\  for \ all \ } s \in [0,t] \}.$$ We have $T_1 \leq \bar{T}<T$; moreover, $T_1 > 0$ since $Y'(0) > 0$ by hypothesis. Hence, by continuity there exists $T_0 \in (0,T_1)$ such that 
$$Y'(s)>0 \text{ for all } s\in [0,T_0].$$
Since $Y$ satisfies (\ref{ODE}) on $(0,T)$ in the sense of distributions, it will also satisfy it on $(T_0, T_1)$, hence, after dividing by $a(t)$ we have
$$\int_{T_0}^{T_1} Y'(s) \left ( -\theta'(s) + \frac{b(s)}{a(s)} \theta(s) \right ) ds \geq \int_{T_0}^{T_1} \frac{c}{a(s)}Y(s) \theta(s) \ ds$$ for every nonnegative test function $\theta$.
Since $Y'(s) > 0$ on $[T_0,T_1)$, we find $Y(s) > Y(0) > 0$  on $[T_0, T_1)$ and therefore by using the positivity of $a$ and $c$ we have
$$\int_{T_0}^{T_1} Y'(s) \left ( -\theta'(s) + \frac{b(s)}{a(s)} \theta(s) \right ) \ ds >  0.$$ 
Let $I(s) = \int_{T_0}^s \frac{b(\tau)}{a(\tau)} \ d\tau$ and notice that $I$ remains finite on $[T_0,T_1]$ since $a(s)\geq a_0$ and $b \in L^1_{loc}(0,\infty)$.  Then, rewrite the inequality as $$-\int_{T_0}^{T_1} Y'(s) e^{I(s)}  \frac{d}{ds} \left [ e^{-I(s)} \theta(s) \right ] \ ds >  0.$$ Taking $\theta$ to be an approximation of $e^{I(s)} \chi_{(T_0,T_1)}(s)$, where $\chi_{(\alpha,\beta)}$ represents the characteristic function on of an interval $(\alpha,\beta)$, we deduce $$Y'(T_1) e^{I(T_1)} - Y'(T_0) >  0.$$
Hence, we have $$Y'(T_1) > Y'(T_0)e^{I(T_1)} \geq 0. $$ By the continuity of $Y'(t)$ it must be the case that $T_1 = \bar{T}$, so $Y'(t)>0$ for all $t \in [0,\bar{T})$. With $\bar{T}<T$ arbitrary we have that $Y'(t)>0$ on $ [0,T)$, which is the desired conclusion.
\end{proof}

Next, we prove the exponential growth of solutions under similar conditions, with the additional assumption of $a \in L^\infty$ included.

\begin{lemma}
\label{L2var}
Let $T \in (0,\infty]$, $a(t)\in C^1(0,T)$ satisfying
$$a(t)\geq a_0 >0, \,\,\, a'(t)>0 $$ 
for some $a_0>0 $ for all $t \in (0,T)$, and let $b \in L^\infty(0,T)$, and $c > 0$ be given. Suppose $Y \in C^1$ satisfies (\ref{ODE})
on $[0,T)$ in the sense of distributions with given initial conditions $Y(0) > 0$ and $Y'(0) \geq \left ( \sqrt{\frac{c}{a(0)}+B^2} - B \right ) Y(0)$. Then, there exist $C> 0$ and an increasing function $\mu>0$ such that
$$ Y(t) \geq C e^{\mu(t)}$$ for all $t \in [0,T)$. The function $\mu$ is given by $\mu(t)=\int_0^t (-B+\sqrt{\frac{c}{a(s)}+B^2})ds$, with $B:=\frac{ \Vert b \Vert_\infty}{a_0}.$
\end{lemma}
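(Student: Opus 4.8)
The plan is to factor the second-order differential inequality \eqref{ODE} into a first-order one for a suitable auxiliary quantity, whose sign we then control, and finally integrate. Set $\nu(t):=-B+\sqrt{c/a(t)+B^2}$, so that $\mu(t)=\int_0^t\nu(s)\,ds$. Note first that $\nu>0$ (since $c/a(t)>0$), that $\nu\in C^1(0,T)$ and is bounded (because $a\in C^1$ with $a\ge a_0$), and that $a'>0$ forces $a$ to increase, hence $c/a(t)$ to decrease, hence $\nu'\le 0$; the hypothesis $a'(t)>0$ enters precisely here. Also, since $Y'(0)\ge\nu(0)Y(0)>0$ and $Y(0)>0$, Lemma \ref{L1} applies and gives $Y'(t)>0$ on $[0,T)$, so $Y$ is increasing and $Y(t)\ge Y(0)>0$ throughout.

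Next I would introduce $Z(t):=Y'(t)-\nu(t)Y(t)$, which is continuous with $Z(0)\ge 0$ by the assumption on the initial data. The role of this particular $\nu$ is the algebraic identity $a\nu^2+2aB\nu-c=0$, obtained by squaring $\nu+B=\sqrt{c/a+B^2}$ and multiplying by $a$. A direct computation, valid for $C^2$ functions and then justified in the sense of distributions by the same manipulations used in the proof of Lemma \ref{L1}, yields
\[
a(t)Z'(t)+\bigl(a(t)\nu(t)+b(t)\bigr)Z(t)=\bigl(a(t)Y''(t)+b(t)Y'(t)-cY(t)\bigr)+h(t)Y(t),
\]
where $h:=c-a\nu'-a\nu^2-b\nu$. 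Replacing $a\nu^2$ by $c-2aB\nu$ via the identity gives $h=-a\nu'+\nu(2aB-b)$. Since $a\ge a_0$ we have $aB\ge\Vert b\Vert_\infty\ge b$, so $2aB-b\ge\Vert b\Vert_\infty\ge 0$; together with $\nu>0$ and $-a\nu'\ge 0$ this shows $h\ge 0$. Because $Y>0$ the right-hand side above is nonnegative, so $Z$ satisfies the first-order distributional inequality $aZ'+(a\nu+b)Z\ge 0$, that is, $Z'+\rho Z\ge 0$ with $\rho:=\nu+b/a\in L^\infty(0,T)$.

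Then I would run the integrating-factor/continuity argument of Lemma \ref{L1} on this first-order inequality: with $R(t):=\int_0^t\rho(s)\,ds$, the function $e^{R}Z$ is nondecreasing in the distributional sense, so $Z(t)\ge e^{-R(t)}Z(0)\ge 0$ for all $t\in[0,T)$. Hence $Y'(t)\ge\nu(t)Y(t)$, and since $Y>0$ we may divide to get $(\log Y)'(t)\ge\nu(t)$; integrating from $0$ to $t$ gives $Y(t)\ge Y(0)e^{\mu(t)}$, which is the claimed bound with $C=Y(0)$, and $\mu$ is increasing because $\nu>0$. The main obstacle I anticipate is purely technical: making the factoring identity and the integrating-factor step rigorous when $Y''$ is only a distribution and $b$ only $L^\infty$. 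This requires no new idea — it is the approximation-by-test-functions device already carried out for Lemma \ref{L1}, applied first to derive the inequality for $Z$ and then to integrate it — but it is where the argument must be written carefully.
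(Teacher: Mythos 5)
Your proof is correct, but it follows a genuinely different route from the paper's. The paper substitutes $f(t)=e^{Bt}Y(t)$, uses $Y'>0$ to absorb the damping and obtain $f''\ge\bigl(\tfrac{c}{a(t)}+B^2\bigr)f$, and then runs an energy-type argument: multiply by $f'$, integrate in time, and use $a'>0$ to discard the sign-definite term $\int_0^t\frac{d}{ds}\bigl(\tfrac{c}{a(s)}+B^2\bigr)f(s)^2\,ds$, arriving at $[f'(t)]^2\ge\bigl(\tfrac{c}{a(t)}+B^2\bigr)[f(t)]^2$ and hence $f'\ge\sqrt{\tfrac{c}{a}+B^2}\,f$. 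You instead factor the operator directly: with $\nu=-B+\sqrt{c/a+B^2}$ and $Z=Y'-\nu Y$, the algebraic identity $a\nu^2+2aB\nu=c$ turns \eqref{ODE} into a first-order inequality $aZ'+(a\nu+b)Z\ge hY$ with $h=-a\nu'+(2aB-b)\nu\ge0$, and a Gronwall/integrating-factor step gives $Z\ge0$. Both arguments prove exactly the same intermediate inequality $Y'\ge\nu Y$ (the paper's $f'\ge\sqrt{c/a+B^2}\,f$ is literally $Z\ge0$), and both use $a'>0$ at the decisive moment — the paper to kill the integral term, you to get $\nu'\le0$ so that $h\ge0$. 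Your computations check out: the identity for $aZ'+(a\nu+b)Z$, the sign of $h$ (using $aB\ge\Vert b\Vert_\infty\ge b$), $Z(0)\ge0$ from the initial-data threshold, and the final integration all hold, and sign-changing $b$ is handled correctly. What your version buys is a uniform treatment of rigor: every step is a first-order distributional inequality of exactly the type already justified in Lemma \ref{L1}, whereas the paper's multiplication of the distributional inequality by $f'$ and integration by parts against the time-dependent coefficient is done only formally. What it costs is that the "right" choice of $\nu$ looks unmotivated until one sees it as the positive root of $a\lambda^2+2aB\lambda-c=0$, which the energy argument produces automatically.
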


\begin{proof} We begin by using the positivity of $Y$ and $Y'$, and the lower bound on $a$ to obtain the inequality
$$Y'' +\frac{ \Vert b \Vert_\infty}{a_0} Y' - \frac{c}{a(t)}Y \geq Y'' + \frac{b(t)}{a(t)} Y' - \frac{c}{a(t)} Y \geq 0.$$ 
Next, we multiply this inequality by $e^{Bt}$ where $B:=\frac{ \Vert b \Vert_\infty}{a_0}$.
Denote $f(t):=e^{Bt}Y(t)$, so $f'(t)=e^{Bt}Y'(t)+Be^{Bt}Y(t)$ and $f''(t)=e^{Bt}Y''(t)+2Be^{Bt}Y'(t)+B^2e^{Bt}Y(t)$. In the new variable, the inequality simplifies to
\[
f''-Bf'-\frac{c}{a(t)} f \geq 0.
\]
Since $Y'>0$ as a consequence of Lemma \ref{L1} we find $Bf'\geq B^2 f$, so this yields
\[
f''\geq \left(\frac{c}{a(t)}+B^2\right)f.
\]
After multiplying by $f'\geq 0 $ we get
\[
\frac{d}{dt}[f'(t)^2]-\left(\frac{c}{a(t)}+B^2\right)\frac{d}{dt}[f(t)^2]\geq 0
\]
which after integrating on $(0,t)$ yields
\begin{multline*}
[f'(t)]^2-\left(\frac{c}{a(t)}+B^2\right)[f(t)]^2+\int_0^t \frac{d}{ds}\left(\frac{c}{a(s)}+B^2\right) [f(s)]^2 ds\\ \geq [f'(0)]^2-\left(\frac{c}{a(0)}+B^2\right)[f(0)]^2.
\end{multline*}
Finally, using the assumptions we have $a'(t)>0$ and the right side of the inequality is nonnegative. It follows that
$$ f'(t) \geq \sqrt{\frac{c}{a(t)} + B^2} \  f(t)$$ and thus
\[
f(t)\geq f(0) e^{\int_0^t\sqrt{\frac{c}{a(s)}+B^2}ds}. 
\] 
We end up with the following growth rate for $Y$:
\[
Y(t)\geq Y(0)e^{\int_0^t (-B+\sqrt{\frac{c}{a(s)}+B^2})ds}.
\]
\end{proof}

For the case when $a\in L^{\infty}(0,\infty)$ we have the following result, for which we are able to provide a more direct proof.
\begin{lemma}
\label{L2}
Let $T \in (0,\infty]$, $a(t)\in C^1(0,T)$ satisfying the growth condition 
$$0<a_0\leq a(t) \leq a_1  $$ 
for some $a_0, a_1 >0 $ and  all $t \in [0,T)$, $b \in L^\infty(0,T)$, and $c > 0$ be given. Suppose $Y \in C^1$ satisfies (\ref{ODE})
on $[0,T)$ in the sense of distributions with given initial conditions $Y(0), Y'(0) > 0$. Then, there exist $C, \mu > 0$ such that
$$ Y(t) \geq C e^{\mu t}$$ for all $t \in [0,T)$.
\end{lemma}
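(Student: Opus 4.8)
The plan is to reduce the variable-coefficient differential inequality (\ref{ODE}) to a constant-coefficient one, using the two-sided bound on $a$ together with the boundedness of $b$, and then to integrate the resulting first-order inequalities.

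First I would invoke Lemma \ref{L1}, which applies here since $b\in L^\infty(0,T)\subset L^1_{loc}(0,T)$, $a\in C^1$ with $a\geq a_0>0$, $c>0$, and $Y(0),Y'(0)>0$; it gives $Y'(t)>0$ on $[0,T)$, so $Y$ is increasing and $Y(t)\geq Y(0)>0$. Dividing (\ref{ODE}) by $a(t)>0$ and using $b(t)/a(t)\leq\|b\|_\infty/a_0=:B$ together with $Y'\geq0$, and $c/a(t)\geq c/a_1$ together with $Y\geq0$, I would obtain, in the sense of distributions on $[0,T)$,
\[
Y''(t)+BY'(t)-kY(t)\geq0,\qquad k:=\frac{c}{a_1}>0.
\]
It is exactly here that the hypothesis $a\leq a_1$ enters, and this is what makes the argument more direct than that of Lemma \ref{L2var}.

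Next I would eliminate the damping term by a first-order factorization. Let $\mu>0$ be the positive root of $\mu^2+B\mu-k=0$ and set $V(t):=Y'(t)+(\mu+B)Y(t)$, which is continuous and, since $Y,Y'>0$, strictly positive. Because $\mu(\mu+B)=k$, one has the identity $V'-\mu V=Y''+BY'-kY$, so the inequality above says precisely that $V'\geq\mu V$ in the sense of distributions; hence $e^{-\mu t}V(t)$ is nondecreasing and $V(t)\geq V(0)e^{\mu t}$ for all $t\in[0,T)$. Since $Y\in C^1$, the function $e^{(\mu+B)t}Y(t)$ is $C^1$ with derivative $e^{(\mu+B)t}V(t)\geq V(0)e^{(2\mu+B)t}$; integrating on $[0,t]$ and rearranging yields
\[
e^{-\mu t}Y(t)\ \geq\ \frac{V(0)}{2\mu+B}+\Big(Y(0)-\frac{V(0)}{2\mu+B}\Big)e^{-(2\mu+B)t}.
\]
The right-hand side is monotone in $t$, equals $Y(0)>0$ at $t=0$, and tends to $V(0)/(2\mu+B)>0$ as $t\to\infty$, so it is bounded below on $[0,\infty)$ by $C:=\min\{Y(0),\ V(0)/(2\mu+B)\}>0$; thus $Y(t)\geq Ce^{\mu t}$ on $[0,T)$, as required.

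I do not anticipate a genuine obstacle; the content is mainly bookkeeping. The two points that need care are the reduction step, which relies on the positivity $Y,Y'>0$ furnished by Lemma \ref{L1} rather than on any monotonicity of $a$ (in contrast with Lemma \ref{L2var}), and the choice of the constant $C$: the crude bound obtained after the double integration degenerates at $t=0$, so $C$ must be taken as the minimum of the two endpoint values of the monotone envelope above rather than either one alone.
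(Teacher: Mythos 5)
Your proof is correct and follows essentially the same route as the paper's: after invoking Lemma \ref{L1} and using the two-sided bounds on $a$ and $b$ to pass to the constant-coefficient inequality $Y''+BY'-kY\geq 0$, your substitution $V=Y'+(\mu+B)Y$ with $\mu$ the positive characteristic root is exactly the paper's $Z=Y'-\lambda_-Y$ with $V'-\mu V\geq 0$ playing the role of $Z'-\lambda_+Z\geq 0$, and the double integration yields the same lower bound. Your only (welcome) refinement is making the constant $C=\min\{Y(0),\,V(0)/(2\mu+B)\}$ explicit so that the bound holds on all of $[0,T)$, where the paper settles for the asymptotic statement $Y(t)\sim \frac{Z(0)}{\lambda_+-\lambda_-}e^{\lambda_+t}$.
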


\begin{proof}
Since $b \in L^\infty(0,T)$, and hence $b \in L_{loc}^1(0,T)$, we apply Lemma \ref{L1} to conclude $Y' > 0$ on $[0,T)$. Using this and the inequality (\ref{ODE}) we find 
$$Y'' +\frac{ \Vert b \Vert_\infty}{a_0} Y' - \frac{c}{a_1}Y \geq Y'' + \frac{b(t)}{a(t)} Y' - \frac{c}{a(t)} Y \geq 0.$$  
Put $\beta := \displaystyle\frac{\Vert b \Vert_\infty}{a_0}$, $\gamma:=\displaystyle\frac{c}{a_1}$ and let $\lambda_{\pm}$ be the roots of the characteristic equation $$\lambda^2 + \beta \lambda - \gamma = 0.$$ Since $\gamma > 0$, we may assume $\lambda_+ > 0$ and $\lambda_- < 0$. Thus, set $Z(t) = Y'(t) - \lambda_- Y(t)$ and compute
$$Z' - \lambda_+ Z = Y'' + \beta Y' -\gamma Y \geq 0$$
which implies $Z(t) \geq Z(0) e^{\lambda_+ t}$ and by the definition of $Z$, $$Y'(t) - \lambda_- Y(t) \geq Z(0) e^{\lambda_+ t}.$$ Finally, this generates a lower bound for $Y$, namely $$ Y(t) \geq  \frac{Z(0)}{\lambda_+ - \lambda_-} (e^{\lambda_+ t} - e^{\lambda_- t} ) + Y(0) e^{\lambda_- t}.$$ Since $\lambda_- < 0$, we have $Z(0) = Y'(0) - \lambda_- Y(0) > 0$ by assumption. Hence, we see that 
$$Y(t) \sim \frac{Z(0)}{\lambda_+ - \lambda_-} e^{\lambda_+ t}$$ as $t$ grows large and because $\lambda_+ > 0$, the exponential growth of $Y$ follows.
\end{proof}

Now we prove that a similar exponential growth result for $b \in L^1(0,\infty)$ also holds, but using different methods.

\begin{lemma}
\label{L3} Let $T \in (0,\infty]$, $a\in C^1(0,T)$ with $0<a_0\leq a(t) \leq a_1$ for all $t \in [0,T)$, $b \in L^1(0,\infty)$, and $c > 0$ be given. Suppose $Y \in C^1$ satisfies (\ref{ODE})
on $[0,T)$ in the sense of distributions with given initial conditions $Y(0), Y'(0) > 0$. Then, there exists $C, \mu > 0$ such that
$$ Y(t) \geq Ce^{\mu t}$$ for all $t \in [0,T)$.
\end{lemma}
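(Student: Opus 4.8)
The plan is to combine the monotonicity already furnished by Lemma~\ref{L1} with an integrating-factor argument tailored to the integrability of $b$. First, since $b\in L^1(0,\infty)\subset L^1_{loc}(0,T)$, Lemma~\ref{L1} applies and yields $Y'(t)>0$ for every $t\in[0,T)$; in particular $Y$ is strictly increasing and $Y(t)\ge Y(0)>0$ throughout, a fact I will need both to guarantee positivity below and to control $Y$ on bounded time intervals.

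The crucial point is that, because $b\in L^1$ and $a\ge a_0$, the function $B(t):=\int_0^t \frac{b(s)}{a(s)}\,ds$ is \emph{bounded}, with $|B(t)|\le M:=a_0^{-1}\Vert b\Vert_{L^1(0,\infty)}$ for all $t$; hence $e^{-M}\le e^{B(t)}\le e^{M}$. I would divide (\ref{ODE}) by $a$ and multiply by the integrating factor $e^{B(t)}$ --- a step to be justified at the level of distributions exactly as in the proof of Lemma~\ref{L1}, by testing against approximations of characteristic functions, the only new feature being that $e^{B}$ is merely absolutely continuous (since $b$ is only $L^1$), so one works with $W^{1,1}$ test functions, which is harmless since only $Y$ and $Y'$ appear. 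This produces, for all $0\le s\le t<T$,
\[
e^{B(t)}Y'(t)-e^{B(s)}Y'(s)\ \ge\ \int_s^t \frac{c}{a(\tau)}e^{B(\tau)}Y(\tau)\,d\tau\ \ge\ \kappa\int_s^t Y(\tau)\,d\tau,\qquad \kappa:=\frac{c\,e^{-M}}{a_1}>0,
\]
where the upper bound $a\le a_1$ enters precisely in the last inequality. Writing $Z(t):=e^{B(t)}Y'(t)$, this says $Z'\ge\kappa Y$ in the integrated sense, while $e^{-M}Z\le Y'\le e^{M}Z$. Thus $(Y,Z)$ obeys the coupled inequalities $Y'\ge e^{-M}Z$, $Z'\ge\kappa Y$, with $Y(0),Z(0)=Y'(0)>0$.

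The exponential growth then follows by a soft Gronwall step applied to a suitable linear combination. Setting $\rho:=\sqrt{e^{-M}/\kappa}$, $\mu:=\rho\kappa=\sqrt{e^{-M}\kappa}>0$, and $U:=Y+\rho Z$, one has $U'\ge e^{-M}Z+\rho\kappa Y=\mu(Y+\rho Z)=\mu U$, so $e^{-\mu t}U(t)$ is nondecreasing and $U(t)\ge U(0)e^{\mu t}$. Using $Z\le e^{M}Y'$ this becomes $Y'(t)+q^{-1}Y(t)\ge q^{-1}U(0)e^{\mu t}$ with $q:=\rho e^{M}$; multiplying by $e^{t/q}$, integrating from $0$ to $t$, and discarding nonnegative terms gives $Y(t)\ge C\big(e^{\mu t}-e^{-t/q}\big)$ for an explicit $C>0$, hence $Y(t)\ge C'e^{\mu t}$ once $t$ is large. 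On the complementary bounded interval $Y(t)\ge Y(0)>0$, and after shrinking the constant the bound $Y(t)\ge C'e^{\mu t}$ holds on all of $[0,T)$.

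I expect the main obstacle to be exactly the bookkeeping in the first step: making rigorous, within the distributional framework, the division by $a\in C^1$ and the multiplication by the absolutely continuous integrating factor $e^{B}$, together with the passage to the integrated inequality for the continuous function $e^{B}Y'$. Everything after that is a standard comparison argument, and since the conclusion pins down neither $C$ nor $\mu$, no sharpness in these constants is required.
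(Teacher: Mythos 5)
Your proof is correct, and it shares the paper's key first step -- the integrating factor $e^{B(t)}$ with $B(t)=\int_0^t b/a$, whose boundedness (via $b\in L^1$ and $a\ge a_0$) is exactly what makes the $L^1$ case work -- but the growth argument itself is genuinely different. The paper fixes a specific rate $\mu=\bigl(\tfrac{21c}{40a_1}e^{-\Vert b/a\Vert_1}\bigr)^{1/2}$ in advance and runs a continuity/bootstrap argument: it defines $T_0=\sup\{t: Y(s)\ge\tfrac12 Y(0)e^{\mu s}\ \forall s\le t\}$, feeds that lower bound back through two integrations of the inequality, and shows the bound self-improves by a factor $8/7$ (using the elementary estimate $e^x>\tfrac52 x$), forcing $T_0=T$. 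You instead linearize the problem into the coupled integral system $Y'\ge e^{-M}Z$, $Z(t)-Z(s)\ge\kappa\int_s^t Y$ for $Z=e^{B}Y'$, and apply Gronwall to the combination $U=Y+\rho Z$ with $\rho$ chosen so that $U$ satisfies $U(t)-U(s)\ge\mu\int_s^t U$; transferring the exponential growth of $U$ back to $Y$ via one more elementary integration. Your route avoids the bootstrap and the paper's ad hoc numerical constants, at the modest cost of a slightly worse (but irrelevant, since the lemma only asserts existence) rate $\mu=\sqrt{c/a_1}\,e^{-\Vert b\Vert_1/a_0}$; your appeal to Lemma \ref{L1} for $Y'>0$ is legitimate since $L^1(0,\infty)\subset L^1_{loc}$, and the distributional bookkeeping you flag is handled by the same test-function approximation already used in the paper's Lemmas \ref{L1} and \ref{L3}.
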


\begin{proof}
Let the functions $a, b$ and the constant $c$ be given satisfying the conditions of the lemma.  Choose $$ \mu = \biggl ( \frac{21c}{40a_1} e^{-\Vert \frac{ b }{a}\Vert_{_1}} \biggr )^{1/2} > 0.$$Define $$T_0 = \sup \left \{ t \in (0,T) :  Y(s) \geq \frac{1}{2}  Y(0) e^{\mu t},  \quad \forall s \in [0,t] \right \}$$ and notice that $T_0 > 0$.  Thus, for any $t \in [0,T_0]$, we can proceed as in the proof of Lemma \ref{L1} to express (\ref{ODE}) as
$$-\int_0^t Y'(s) e^{I(s)} \frac{d}{ds} \left [ e^{-I(s)} \theta(s) \right ] \ ds \geq \int_0^t \frac{c}{a(s)}Y(s) \theta(s) \ ds$$ where $I(s) = \int_0^s \frac{b(\tau)}{a(\tau)} \ d\tau$.  As before, we take $\theta$ to be an approximation of the integrating factor multiplied by the characteristic function on $(0,t)$ and the inequality becomes
$$ e^{I(t)} Y'(t) - Y'(0) \geq  \int_0^t \frac{c}{a(s)} e^{I(s)} Y(s) \ ds.$$
Since $t \in [0,T_0]$ this implies
$$  Y'(t) \geq  Y'(0) e^{-I(t)} + \frac{c}{2a_1}  Y(0) \int_0^t e^{I(s) - I(t)} e^{\mu s} \ ds.$$ 
Since $\frac{b}{a}$ is integrable, we find
$$  Y'(t) \geq e^{-\Vert \frac{b}{a} \Vert_1} \biggl ( Y'(0) + \frac{ c}{2a_1\mu}  Y(0) (e^{\mu t} -1 ) \biggr )$$ and after an integration
$$  Y(t) \geq  Y(0)  + e^{-\Vert \frac{b}{ a} \Vert_1} \biggl (  Y'(0)t + \frac{ c}{2a_1\mu}  Y(0) \left [ \frac{1}{\mu}(e^{\mu t} -1 ) - t \right ] \biggr ).$$
Reordering terms, this becomes
\begin{equation}
\label{five}  Y(t) \geq  Y(0) \biggl ( 1 - e^{-\Vert\frac{b}{ a} \Vert_1} \frac{ c}{2a_1\mu^2} \biggr ) + e^{-\Vert \frac{b}{a} \Vert_1}  Y'(0) t +  e^{-\Vert \frac{b}{a} \Vert_1}  Y(0) \frac{c}{2a_1\mu^2}  \biggl (e^{\mu t} - \mu t \biggr ).
\end{equation}
By definition, $\mu^2 \geq \frac{c}{a_1}e^{-\Vert \frac{b}{a} \Vert_1}  $, thus the first term on the right side of (\ref{five}) is nonnegative.  Similarly, the second term is nonnegative by hypothesis.  Finally, call the last term $L$.  Since $$e^x > \frac{5}{2} x$$ for all $x \in \mathbb{R}$ and $ Y(0) \geq 0$ we find
\begin{eqnarray*}
L & = &   \frac{3}{5}e^{-\Vert \frac{b}{a} \Vert_1}  Y(0) \frac{c}{2a_1\mu^2} e^{\mu t} + \frac{2}{5}e^{-\Vert \frac{b}{a} \Vert_1}  Y(0) \frac{c}{2a_1\mu^2}  \biggl (e^{\mu t} - \frac{5}{2}\mu t \biggr )\\
& > & \frac{3}{5}e^{-\Vert \frac{b}{a} \Vert_1}  Y(0) \frac{ c}{2a_1\mu^2} e^{\mu t}\\
& = & \frac{3}{5} \cdot \frac{40}{21} \cdot \frac{1}{2}  Y(0) e^{\mu t}\\
& = & \frac{8}{7} \left ( \frac{1}{2}  Y(0) e^{\mu t} \right ).
\end{eqnarray*}
Thus, (\ref{five}) yields the lower bound
\begin{equation}
\label{six}  Y(t) \geq \frac{8}{7} \left ( \frac{1}{2}  Y(0) e^{\mu t} \right )
\end{equation} for $t \in [0,T_0]$.  So, we find $ Y(T_0) \geq \frac{4}{7}  Y(0) e^{\mu T_0}$ and this implies $T_0 = T$.  Hence, the lower bound (\ref{six}) is valid for all $t \in [0,T) $ and it follows that $ Y(t)$ must grow exponentially.
\end{proof}

We state the next lemma regarding the blow-up of solutions to justify the proof of Theorem \ref{T1a}. A nice proof of this blow-up result is given in Proposition 3.1 in \cite{TY}.
\begin{lemma}
\label{L4}
Let $a\in C^1(0,T)$ with $0< a_0 \leq a(t) \leq a_1$ for all $t \in [0,T),\,$$B, C > 0$, $r\in [0,1),$ and $p > 1$ be given. Let $Y \in C^1$ be a solution of (\ref{ODE2}) 
\[
Y''(t) + BY'(t) - C (1+t)^{-r}Y(t)^p \geq 0
\]
on the interval $[0,T)$ in the sense of distributions with given initial conditions $Y(0), Y'(0) > 0$. Then $T < \infty$.
\end{lemma}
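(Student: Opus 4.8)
The plan is to first show that $Y'$ stays positive on $[0,T)$ — so that $Y$ is increasing and bounded below by $Y(0)>0$ — and then, assuming $T=\infty$, to derive a contradiction. For the positivity step I introduce $Z(t)=e^{Bt}Y'(t)$, which is continuous since $Y\in C^{1}$. On any subinterval of $[0,T)$ on which $Y>0$, the differential inequality reads, in the distributional sense, $Z'(t)=e^{Bt}\bigl(Y''(t)+BY'(t)\bigr)\ge Ce^{Bt}(1+t)^{-r}Y(t)^{p}\ge 0$, so $Z$ is nondecreasing there; hence $Z(t)\ge Z(0)=Y'(0)>0$, i.e.\ $Y'(t)\ge e^{-Bt}Y'(0)>0$. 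Since $Y(0)>0$ and $Y$ is continuous, the maximal subinterval on which $Y>0$ contains $0$, and on it $Y'>0$ forces $Y$ to be increasing, so $Y\ge Y(0)>0$ there; this subinterval therefore cannot terminate before $T$. This is precisely the continuity argument of Lemma~\ref{L1}, and it yields $Y'>0$ and $Y(t)\ge Y(0)>0$ on all of $[0,T)$. (The hypotheses on $a$ are not used in the displayed inequality.)

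Assume now $T=\infty$. Integrating $(e^{Bt}Y')'\ge Ce^{Bt}(1+t)^{-r}Y^{p}$ over $[0,t]$, using $(1+s)^{-r}\ge(1+t)^{-r}$ for $0\le s\le t$ together with $Y(s)\ge Y(0)$, yields $Y'(t)\ge c_{1}(1+t)^{-r}$ for $t\ge 1$, which — since $r<1$ — integrates to $Y(t)\ge c_{2}(1+t)^{1-r}$ for large $t$. I then bootstrap: whenever $Y(t)\ge c\,(1+t)^{\gamma}$ for $t$ large, inserting this into the same integrated inequality and using $\int^{t}e^{Bs}(1+s)^{\beta}\,ds\sim B^{-1}e^{Bt}(1+t)^{\beta}$ gives $Y'(t)\ge c'(1+t)^{p\gamma-r}$ and hence $Y(t)\ge c''(1+t)^{p\gamma-r+1}$; the new exponent $p\gamma+(1-r)$ is strictly larger than $\gamma$ and tends to $+\infty$ under iteration, so after finitely many steps (their number depending only on $p$ and $r$) one obtains $Y(t)\ge(1+t)^{\kappa}$ for all $t$ beyond some $t_{0}$, with $\kappa>r/(p-1)$. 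In particular $M:=Y(t_{0})$ may be taken arbitrarily large, and in fact $M\gg(2+t_{0})^{r/(p-1)}$.

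For the final step I localize to the unit window $[t_{0},t_{0}+1]$, on which $(1+t)^{-r}\ge(2+t_{0})^{-r}=:\rho$. Integrating $(e^{Bt}Y')'\ge C\rho\,e^{Bt}Y^{p}$ over $[t_{0},t]$ and discarding the nonnegative boundary term gives $Y'(t)\ge\sigma\,P(t)$ with $P(t):=\int_{t_{0}}^{t}Y(s)^{p}\,ds$ and $\sigma:=C\rho e^{-B}$. Thus the pair $(Y,P)$ is a supersolution of the cooperative system $\tilde Y'=\sigma\tilde P,\ \tilde P'=\tilde Y^{p}$ with data $\tilde Y(t_{0})=M,\ \tilde P(t_{0})=0$, so by comparison $Y\ge\tilde Y$ as long as both exist. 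But $\tilde Y$ solves $\tilde Y''=\sigma\tilde Y^{p}$ with $\tilde Y(t_{0})=M>0$, $\tilde Y'(t_{0})=0$, and its energy identity $\tfrac12(\tilde Y')^{2}=\tfrac{\sigma}{p+1}\bigl(\tilde Y^{p+1}-M^{p+1}\bigr)$ gives, once $\tilde Y\ge 2^{1/(p+1)}M$, the bound $\tilde Y'\ge\sqrt{\sigma/(p+1)}\,\tilde Y^{(p+1)/2}$; separating variables shows that $\tilde Y$ both reaches $2^{1/(p+1)}M$ and subsequently blows up, within a total time of order $\sigma^{-1/2}M^{-(p-1)/2}$ after $t_{0}$. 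Because $M\gg(2+t_{0})^{r/(p-1)}\sim\sigma^{-1/(p-1)}$ by the previous step, this time is less than $1$, so $\tilde Y$ — and hence $Y$ — blows up before $t_{0}+1$, contradicting $T=\infty$.

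I expect the main obstacle to be exactly the balancing act in this last step: the damping contributes the shrinking factor $e^{-B}$ and the weight contributes $\rho\sim t_{0}^{-r}$ to the effective nonlinear strength $\sigma$, so a comparison on a fixed window succeeds only after $Y$ itself has been driven large enough to overpower them — which is the purpose of the bootstrap and the one place where $r<1$ is genuinely used, since it makes the cumulative forcing $\int^{\infty}(1+t)^{-r}\,dt$ diverge. As the statement notes, one may alternatively appeal directly to Proposition~3.1 of \cite{TY}.
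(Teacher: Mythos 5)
Your argument is correct, but note that the paper does not actually prove Lemma~\ref{L4}: it simply defers to Proposition~3.1 of \cite{TY}. Your proposal is therefore a genuinely self-contained substitute, and it is in the same spirit as the Todorova--Yordanov argument: first establish $Y'>0$ and $Y\geq Y(0)$ (your $Z=e^{Bt}Y'$ device is exactly the integrating-factor trick of Lemma~\ref{L1}), then iteratively upgrade polynomial lower bounds $Y\gtrsim(1+t)^{\gamma}$ via the map $\gamma\mapsto p\gamma+1-r$ (which escapes to $+\infty$ precisely because $p>1$ and the starting exponent $1-r$ is positive --- this is the one place $r<1$ is essential), and finally run a local nonlinear ODE comparison on a unit window once $Y(t_0)$ dominates $\sigma^{-1/(p-1)}\sim(1+t_0)^{r/(p-1)}$. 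The exponent bookkeeping in the last step checks out: the blow-up time of the comparison solution is $C_p(\sigma M^{p-1})^{-1/2}$ and $\sigma M^{p-1}\gtrsim(1+t_0)^{\kappa(p-1)-r}\to\infty$ since $\kappa>r/(p-1)$. Two points deserve a sentence more care in a final write-up: (i) the comparison with the cooperative system $\tilde Y'=\sigma\tilde P$, $\tilde P'=\tilde Y^{p}$ uses a quasimonotone (Kamke-type) comparison theorem with non-strict inequalities and identical initial data, which needs either a local Lipschitz bound on $y\mapsto y^{p}$ away from zero or an $\varepsilon$-perturbation of the subsolution to close; and (ii) you should say explicitly that the energy identity gives a finite passage time from $M$ to $2^{1/(p+1)}M$ because $\int_{1}^{2^{1/(p+1)}}(u^{p+1}-1)^{-1/2}\,du<\infty$. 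What the citation buys the paper is brevity; what your proof buys is transparency about where each hypothesis enters --- $r<1$ in the first bootstrap step, $b\in L^{\infty}$ (hence a fixed $B$) in the $e^{-B}$ loss on the unit window, and $p>1$ both in the divergence of the bootstrap and in the integrability of $\tilde Y^{-(p+1)/2}$ at infinity. You are also right that the hypotheses on $a$ in the lemma statement are vestigial: the displayed inequality has already absorbed $a$ into the constants, as is done at the end of the proof of Theorem~\ref{T1a}.
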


Finally, we turn to the corresponding lemmas for the first-order equation and consider the differential inequalities
\begin{equation}\label{ODEH}
b(t)Y'(t)- c Y(t) \geq 0
\end{equation}
and
\begin{equation}\label{ODEH2}
b(t)Y'(t)- c Y(t)^p \geq 0
\end{equation}
for $p \in (1,\infty)$, which arise in the proof of Theorem \ref{T2}.

\begin{lemma}
\label{L5}
Let $T \in (0,\infty]$, $b \in L^1_{loc}(0,T)$ be positive, and $c > 0$ be given. Suppose $Y \in C^1$ satisfies (\ref{ODEH})
on $[0,T)$ with given initial condition $Y(0) > 0$. Then
\begin{enumerate}
\item[(i)] $Y(t) > \displaystyle Y(0)$, for all $t\in [0, T)$;
\item[(ii)] Moreover, if $a \in L^\infty(0,T)$ there exist $C, \mu >0$ such that 
\[
Y(t)\geq C e^{\mu t}
\]
 for all $t\in [0, T)$.
\end{enumerate}
\end{lemma}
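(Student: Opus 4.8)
The plan is to dispatch both parts with a single continuity-plus-integrating-factor scheme, in the spirit of the proof of Lemma~\ref{L1} but considerably simpler because (\ref{ODEH}) is first order and because $b$ is assumed positive. The guiding observation is that wherever $Y>0$ the sign hypothesis $b>0$ turns (\ref{ODEH}) into $Y'(t)\ge\tfrac{c}{b(t)}Y(t)>0$, so $Y$ can only increase; the whole content of part~(i) is a bootstrap showing that $Y$ never leaves the positive half-line.

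For part (i) I would set
\[
T_1=\sup\bigl\{t\in[0,T):\ Y(s)>0\ \text{for all }s\in[0,t]\bigr\},
\]
which is positive since $Y$ is continuous and $Y(0)>0$. On $[0,T_1)$ both $Y(t)>0$ and $b(t)>0$, so (\ref{ODEH}) forces $Y'(t)\ge\tfrac{c}{b(t)}Y(t)>0$ for a.e.\ $t$; as $Y\in C^1$ this gives $Y'\ge0$ everywhere, and then $Y(t)-Y(s)=\int_s^t Y'(\tau)\,d\tau>0$ for $0\le s<t<T_1$, so $Y$ is strictly increasing on $[0,T_1)$ and $Y(t)>Y(0)>0$ there. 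If $T_1<T$, then $Y(T_1)=\lim_{t\uparrow T_1}Y(t)\ge Y(0)>0$, and continuity yields $Y>0$ on a slightly larger subinterval of $[0,T)$, contradicting the definition of $T_1$. Hence $T_1=T$, which simultaneously gives $Y>0$ on $[0,T)$ and $Y(t)>Y(0)$ for $t\in(0,T)$.

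For part (ii) — where the stated hypothesis should read $b\in L^\infty(0,T)$, as no coefficient $a$ occurs in (\ref{ODEH}) — I would feed the positivity from part (i) back into (\ref{ODEH}). Since $Y(t)\ge Y(0)>0$ on $[0,T)$ and $0<b(t)\le\|b\|_\infty$ for a.e.\ $t$, one obtains $Y'(t)\ge\tfrac{c}{b(t)}Y(t)\ge\mu Y(t)$ a.e., with $\mu:=c/\|b\|_\infty>0$. Then $\tfrac{d}{dt}\bigl(e^{-\mu t}Y(t)\bigr)=e^{-\mu t}\bigl(Y'(t)-\mu Y(t)\bigr)\ge0$ a.e.; since $e^{-\mu t}Y(t)$ is $C^1$ it is nondecreasing, whence $Y(t)\ge Y(0)e^{\mu t}$, and $C:=Y(0)$ works.

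The one genuinely delicate point is the weak regularity of $b$ in part (i): it is merely $L^1_{loc}$ and positive, so $1/b$ need not be locally integrable and the integrating factor $\exp\!\bigl(\int_0^t\tfrac{c}{b(s)}\,ds\bigr)$ of the classical argument may not exist. The continuity argument on $Y$ itself avoids forming this integral; the only care needed is that an a.e.\ differential inequality for a $C^1$ function still yields the pointwise monotonicity statements used above. In part (ii) the $L^\infty$ bound on $b$ makes $1/b$ bounded below, so this obstruction vanishes and the bounded integrating factor $e^{-\mu t}$ can be used without fuss.
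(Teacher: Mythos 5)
Your proof is correct and follows essentially the same route as the paper: a continuity/bootstrap argument using the positivity of $b$ to get $Y'>0$ and hence $Y(t)>Y(0)$ for part (i), followed by dividing through by $\Vert b\Vert_\infty$ and a Gronwall-type integrating factor for part (ii). You also correctly identify that the hypothesis ``$a\in L^\infty(0,T)$'' in part (ii) is a typo for $b\in L^\infty(0,T)$, which is indeed what the paper's own proof uses.
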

\begin{proof}
We proceed by using a continuity argument yet again and define
$$T_0 = \sup \{t \in (0,T):  Y(s) >\frac{1}{2}Y(0),  \quad \forall  s \in [0,t] \}.$$ 
Note that on $[0, T_0)$ we have by (\ref{ODEH}) $$ b(t) Y'(t) \geq c Y(t) > \frac{1}{2} c Y(0) > 0.$$ Hence, the positivity of $b$ implies $Y'(t) > 0$ as well. This yields $Y(t) > Y(0)$ for all $t \in (0,T_0)$, whence $T_0=T$, and the first conclusion of the lemma holds. For the second part of the lemma, we use the $L^{\infty}$ bound on $b$ and the positivity of $Y$ to obtain from (\ref{ODEH}) the inequality
$$ \Vert b\Vert_\infty Y'(t) \geq b(t) Y'(t) \geq c Y(t)$$ and thus
$$Y'(t) \geq c\Vert b \Vert_\infty^{-1} Y(t).$$
As in the proof of Lemma \ref{L2}, the exponential growth of $Y$ on $[0,T)$ follows as an immediate consequence.
\end{proof}

Our final lemma of the section establishes the blow-up result for (\ref{ODEH2}) under the assumption of a bounded damping coefficient.

\begin{lemma}\label{L6} 
Let $b\in L^{\infty}(0, T)$ be positive with $c > 0$, and $p > 1$ given. Let $Y \in C^1$ be a solution of (\ref{ODEH2}) on the interval $[0,T)$ with given initial condition $Y(0) > 0$. Then $T < \infty$.
\end{lemma}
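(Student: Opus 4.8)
The plan is to turn the differential inequality (\ref{ODEH2}) into an autonomous power-type inequality of the form $Y'\ge \kappa Y^{p}$ and then read off an explicit finite upper bound for $T$ by separation of variables.

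First I would check that $Y$ stays strictly positive on $[0,T)$. Since $Y(0)>0$, $Y\in C^{1}$, and $b$ is positive, a short continuity argument (identical in spirit to the one used for Lemma \ref{L5}(i)) shows that on any subinterval where $Y>0$ the inequality $b(t)Y'(t)\ge cY(t)^{p}>0$ together with $b(t)>0$ forces $Y'(t)\ge 0$; hence $Y$ is nondecreasing and $Y(t)\ge Y(0)>0$ for all $t\in[0,T)$.

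Next, using $Y'\ge 0$ together with the pointwise bound $b(t)\le\|b\|_{\infty}$, the inequality (\ref{ODEH2}) yields
$$Y'(t)\ \ge\ \frac{c}{\|b\|_{\infty}}\,Y(t)^{p}\qquad\text{for a.e. } t\in[0,T).$$
Because $Y$ is bounded away from zero and $C^{1}$, the function $t\mapsto Y(t)^{1-p}$ is $C^{1}$; multiplying the last inequality by $(1-p)Y(t)^{-p}<0$ (which reverses the inequality) and integrating over $[0,t]$ gives
$$Y(t)^{1-p}\ \le\ Y(0)^{1-p}-(p-1)\,\frac{c}{\|b\|_{\infty}}\,t .$$
Since the left-hand side is strictly positive, the right-hand side must remain positive for every $t\in[0,T)$, which forces
$$t\ <\ \frac{\|b\|_{\infty}\,Y(0)^{1-p}}{c\,(p-1)}\ =:\ T^{*},$$
and hence $T\le T^{*}<\infty$, as claimed.

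I do not anticipate a serious obstacle: this is the first-order analogue of the second-order blow-up Lemma \ref{L4}, but unlike that case it admits a fully elementary, self-contained argument. The only points that need care are the a.e.\ manipulations caused by $b$ being merely $L^{\infty}$ (handled by observing that $Y\in C^{1}$ makes $Y'$ continuous and $Y^{1-p}$ absolutely continuous) and the preliminary positivity step, which is exactly where the hypothesis $b>0$ enters.
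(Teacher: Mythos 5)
Your proof is correct and follows essentially the same route as the paper: establish positivity and monotonicity of $Y$ by a continuity argument, use $b(t)\le\|b\|_{\infty}$ and $Y'\ge 0$ to reduce to the separable inequality $Y'\ge \tfrac{c}{\|b\|_{\infty}}Y^{p}$, and integrate to obtain the explicit finite bound on $T$. The only difference is cosmetic: the paper phrases the conclusion as blow-up of the lower bound at $T_{\infty}$ after assuming $T=\infty$, while you read off $T\le T^{*}$ directly from positivity of $Y^{1-p}$.
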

\begin{proof}
Assume $T = \infty$.  As in the previous lemma, a continuity argument easily establishes $Y(t) > Y(0)$ and $Y'(t) > 0$ for $t \in [0,T)$.
Using this (\ref{ODEH2}) implies 
\[
\|b\|_{\infty}Y'(t)\geq b(t)Y'(t)\geq cY^p(t).
\]
The inequality can be rearranged in separable form 
\[
\frac{Y'(t)}{Y^p(t)} \geq \frac{c}{\|b\|_{\infty}}
\]
which after integration yields
\[
Y(t)\geq \left[ Y^{1-p}(0) - (p-1) \frac{c}{\|b\|_{\infty}}t\right]^{1/(1-p)}.
\]
Since there exists a time $T_\infty<\infty$ such that  $$Y^{1-p}(0) = (p-1)\frac{c}{\|b\|_{\infty}}T_\infty ,$$ we conclude that $Y$ blows up as $t \uparrow T_\infty$.

\end{proof}

%\begin{remark}
%Lemma \ref{L6} holds under a different assumption that the coefficient $a$ satisfies
%  $$\frac{1}{a}\in L^1_{loc}(0,T), \quad \int_0^{\infty} \frac{1}{a(s)}ds=\infty.$$ 
%  In fact only the milder condition 
%  \[
%\int_0^{\infty} \frac{1}{a(s)}ds > \frac{1}{C(p-1)Y^{p-1}(0)}
%\]
%is necessary. This last assumption, however, would be more difficult to verify in the context of Theorem \ref{T2}, since it would involve computing both the eigenfunction $\phi_1$ and the eigenvalue $-\sigma^2$ on which $C$ depends. We prefer the condition that $a\in L^{\infty}$ for part (b) of Theorem \ref{T2}, since this coincides with the assumption needed for part (a) and it is also very easy to verify in practice.   
%\end{remark}
%%%%%%%%%%%%%%%%%%%%

\section{Applications of Theorems \ref{T1}-\ref{T2}}

Our results are immediately applicable to a few well-known and well-studied problems, including steady states described in the aforementioned papers \cite{CL, GNW, KS, SZ}.  Namely, for steady state solutions corresponding to the following equations (and their parabolic counterparts)
$$a(t)\partial_{tt} u+b(t)\partial_t u -\Delta u=|u|^p, \quad x \in \Omega, \, t > 0$$
$$a(t)\partial_{tt}u + b(t) \partial_t u - \Delta u +V(x) u = f(u), \qquad x \in \Omega, t > 0$$
the instability results of our main theorem via exponential growth, and in some cases blow-up, are valid under reasonable assumptions on the associated nonlinearity. To our knowledge, the instability results of this section are new for both the hyperbolic and parabolic equations with unbounded time-dependent (and possibly sign-changing) coefficients. The proofs rely mostly on the analysis of the steady state solutions that was performed in \cite{KS}, from which we include only the essential findings.

\subsection{Laplacian with power nonlinearity}
Our first application considers solutions of the steady state equation
\begin{equation}\label{wep}
-\Delta  \vp = \vert \vp \vert^p, \quad x\in \Omega
\end{equation}
with $n>2$. For $\Omega = \bfR^n$, Li \cite{Li} proved the existence of an uncountable family of positive solutions $\vp \in C^2(\bfR^n)$ to (\ref{wep}) for $p > \frac{n+1}{n-2}$.  Shortly afterward, Chen and Li \cite{CL} explicitly determined the form of all positive, $C^2$ solutions for $p = \frac{n+1}{n-2}$ and showed that for $p < \frac{n+2}{n-2}$ or $n=1,2$ no such solutions of (\ref{wep}) can exist.  More recently, in \cite{KS} it was shown that for $p \geq \frac{n+2}{n-2}$, the linearized operator $\mathcal{L} = -\Delta - p\vp^{p-1}$  has a negative eigenvalue if
$$\left(\frac{n-2}{2}\right)^2< \frac{2p}{p-1}\left(n-2-\frac{p}{p-1}\right).$$
On the other hand, if we have $ p> \frac{n+2}{n-2}$, but 
$$\left(\frac{n-2}{2}\right)^2 \geq \frac{2p}{p-1}\left(n-2-\frac{p}{p-1}\right)$$
then the linearized operator has no negative spectrum.  It was then proved that steady states are nonlinearly unstable in the former case and stable in the latter.  We can now extend these results to prove the dichotomy given by the following theorem:
\begin{theorem}\label{T43}
Let $n>2$ and $p\geq \frac{n+2}{n-2}$ be given, and let $\vp \in C^2(\Omega)$ be a positive solution of (\ref{wep}). Denote
$$p_c=\begin{cases}
\infty, & n\leq 10,\\
\frac{n^2-8n+4+8\sqrt{n-1}}{(n-2)(n-10)}, & n>10,
\end{cases}$$
for convenience. Then, $p_c>\frac{n+2}{n-2}$ and the following dichotomy holds.

If $p<p_c$, then $\vp$ is a nonlinearly unstable solution of the equations
$$a(t)\partial_{tt} u+b(t)\partial_t u -\Delta u=|u|^p, \quad x \in \Omega, \, t > 0$$
and
$$b(t)\partial_t u-\Delta u=|u|^p, \quad x\in \Omega, \, t > 0$$
where $b(t)$ satisfies the assumptions given in Theorems \ref{T1} -- \ref{T2}, and nonlinear instability occurs as given by the aforementioned theorems.

Contrastingly, if $p\geq p_c$, then $\vp$ is a stable solution of the linearized equation, i.e. the linearized operator has no negative spectrum.
\end{theorem}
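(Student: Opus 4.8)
The plan is to split the argument into a purely algebraic step, which converts the spectral criterion of \cite{KS} recorded above into the explicit threshold $p_c$, and a verification step, in which the hypotheses of Theorems \ref{T1}, \ref{T1a}, and \ref{T2} are checked for the equations at hand so that their conclusions may be quoted directly.

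For the algebraic step I would set $s := \tfrac{2}{p-1}$, so that the quantity inside the spectral criterion becomes $\tfrac{2p}{p-1}\bigl(n-2-\tfrac{2}{p-1}\bigr)=(s+2)(n-2-s)$, and the existence of a negative eigenvalue is equivalent to $(s+2)(n-2-s)>\bigl(\tfrac{n-2}{2}\bigr)^{2}$, i.e.\ to the quadratic inequality $s^{2}-(n-4)s+\tfrac{(n-2)^{2}}{4}-2(n-2)<0$. A short computation gives discriminant $4(n-1)$ and roots $s_{\pm}=\tfrac{(n-4)\pm 2\sqrt{n-1}}{2}$, so the inequality holds precisely for $s\in(s_{-},s_{+})$. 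Because $p\ge\tfrac{n+2}{n-2}$ is equivalent to $s\le\tfrac{n-2}{2}$, and $\tfrac{n-2}{2}<s_{+}$ for every $n>2$, the upper constraint is automatic and the criterion reduces to $s>s_{-}$. If $2<n\le 10$ then $n-4\le 2\sqrt{n-1}$, hence $s_{-}\le 0<s$ and the criterion holds for all admissible $p$, matching $p_c=\infty$. If $n>10$ then $s_{-}>0$ and $(n-2)(n-10)>0$; inverting the decreasing map $p\mapsto s=\tfrac{2}{p-1}$ turns $s>s_{-}$ into $p<1+\tfrac{2}{s_{-}}$, and rationalizing $\tfrac{2}{s_{-}}=\tfrac{4}{\,n-4-2\sqrt{n-1}\,}$ by $\tfrac{n-4+2\sqrt{n-1}}{n-4+2\sqrt{n-1}}$ gives $1+\tfrac{2}{s_{-}}=\tfrac{n^{2}-8n+4+8\sqrt{n-1}}{(n-2)(n-10)}=p_c$. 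Evaluating the strict inequality at $s=\tfrac{n-2}{2}$ (that is, at $p=\tfrac{n+2}{n-2}$) shows it is satisfied there, which yields $p_c>\tfrac{n+2}{n-2}$.

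For the verification step, observe that for these equations $L=-\Delta$ is self-adjoint, $f(x,u)=|u|^{p}$ is $C^{1}$ and convex for $p>1$, and $\partial_u f(x,\vp)=p\vp^{p-1}$ because $\vp>0$; thus the adjoint linearized operator $L^{*}-\partial_u f(x,\vp)$ is exactly $\mathcal L=-\Delta-p\vp^{p-1}$. When $p<p_c$, the algebraic step together with \cite{KS} supplies a negative eigenvalue $\lambda_1=-\sigma^{2}$ of $\mathcal L$ with a positive, hence nonnegative, eigenfunction $\phi_1$; the facts that $\phi_1\in L^{2}(\Omega)$ (on $\bfR^{n}$, $\phi_1$ is the ground state, lying below the essential spectrum) and $\phi_1,\vp\phi_1\in L^{1}(\Omega)$ are part of the steady-state analysis of \cite{Li,CL,KS} and I would cite them rather than reprove them. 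With the stated hypotheses on $a(t)$ and $b(t)$, the hypotheses of Theorem \ref{T1} then hold, giving nonlinear instability of $\vp$ for the hyperbolic equation; since $|\vp|^{p}$ and $p\vp^{p-1}$ are bounded (the $C^{2}$ positive solutions of Li and Chen--Li are bounded) and $f(x,u)=|u|^{p}$ trivially satisfies $f(x,u)\ge|u|^{p}$ with $p>1$, Theorem \ref{T1a} upgrades the instability to finite-time blow-up in the parameter ranges it covers; and, since $b(t)>0$ is among the hypotheses imported from Theorem \ref{T2}, the analogous conclusions follow for the parabolic equation $b(t)\partial_t u-\Delta u=|u|^{p}$. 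When $p\ge p_c$, the same equivalence together with \cite{KS} shows directly that $\mathcal L$ has no negative spectrum, which is the second alternative of the dichotomy.

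The routine but error-prone part is the algebra of the first step: tracking the direction of the inequality across the monotone substitution $p\mapsto 2/(p-1)$, the sign of $(n-2)(n-10)$, and the borderline behavior at $n=10$ where $s_{-}=0$. The only genuinely non-elementary input, namely the existence of an $L^{2}$ eigenfunction of $\mathcal L$ on the unbounded domain $\bfR^{n}$ together with the weighted $L^{1}$ bounds needed for Theorem \ref{T1a}, is not established here but taken from \cite{Li,CL,KS}.
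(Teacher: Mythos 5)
Your proposal is correct and follows the same route as the paper, which simply defers the spectral analysis of $\mathcal L=-\Delta-p\vp^{p-1}$ to \cite{KS} and omits the algebra; your substitution $s=\tfrac{2}{p-1}$ correctly converts the Hardy-type criterion into the quadratic with roots $s_{\pm}=\tfrac{(n-4)\pm2\sqrt{n-1}}{2}$ and recovers exactly the stated $p_c$ (the Joseph--Lundgren exponent), and your verification of the hypotheses of Theorems \ref{T1}--\ref{T2} matches what the paper takes for granted. Note that in carrying out the algebra you have implicitly corrected a typo in the paper's displayed criterion: the factor must read $n-2-\tfrac{2}{p-1}$ rather than $n-2-\tfrac{p}{p-1}$, since only the former is consistent with the asymptotics $p\vp^{p-1}\sim\tfrac{2p}{p-1}\bigl(n-2-\tfrac{2}{p-1}\bigr)|x|^{-2}$ from \cite{Li,KS} and with the stated value of $p_c$.
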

To our knowledge, the instability results of Theorem \ref{T43} are new for both the parabolic and hyperbolic equations with time-dependent damping.  For the nonlinear heat equation, the stability of steady states was studied earlier by Gui, Ni, and Wang \cite{GNW}.  They arrived at a theorem very similar to the dichotomy above, though only the linear stability and instability were proved.  As the details of the proof of the theorem are closely related to those presented by Strauss and Karageorgis, we omit it and refer the reader to \cite{KS} for more information.

If $\Omega$ is a bounded subset of $\bfR^n$, then less is generally known.  As mentioned in the introduction, the problem with a concave power nonlinearity has been studied in \cite{FM, FM1}.  The authors proved the stability of steady states to such problems. Using our main theorems, we can arrive at complementary results for steady states satisfying
\begin{equation}\label{wep2}
-\Delta  \vp = -\vert \vp \vert^p, \quad x\in \Omega.
\end{equation}

\subsection{Laplacian with a potential and convex nonlinearity}
Next, we consider general conditions upon which the solutions of the equation
\begin{equation}
\label{steadypotential}
-\Delta \vp + V(x) \vp =f(\vp), \qquad x \in \bfR^n
\end{equation}
are nonlinearly unstable solutions of 
\begin{equation}
\label{nh}
a(t)\partial_{tt}u + b(t) \partial_t u - \Delta u +V(x) u = f(u), \qquad x \in \Omega, t > 0
\end{equation}
and
\begin{equation}
\label{np}
b(t) \partial_t u - \Delta u +V(x) u = f(u), \qquad x \in \Omega, t > 0.
\end{equation}
by utilizing the main theorems of Section $2$.  We will assume for these problems that (\ref{steadypotential}) has a positive solution $\vp \in C^2 \cap H^1$ that satisfies $\vp(x) \to 0$ as $\vert x \vert \to \infty$.  For such steady states, we have the following result:
\begin{theorem}
\label{convexapp}
Assume that the potential $V: \Omega \to \bfR$ is continuous, bounded, and satisfies $\sigma_{ess} (-\Delta + V) \subset [0,\infty)$.  In addition, assume $f \in C^1(\bfR)$ is convex with $f(0) = f'(0) = 0$ and not identically zero. Then, $\vp$ is a nonlinearly unstable solution of (\ref{nh}) and (\ref{np}). More precisely, the conclusions of Theorems \ref{T1} -- \ref{T2} are valid.
\end{theorem}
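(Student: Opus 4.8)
The plan is to verify, with $L=-\Delta+V(x)$, the hypotheses of Theorems \ref{T1}, \ref{T1a}, and \ref{T2}, and then invoke those theorems directly. Since $V$ is bounded, $L$ is self-adjoint on $L^2(\Omega)$, so $L^*=L$, and the linearization of (\ref{nh})--(\ref{np}) about $\vp$ is the Schr\"odinger-type operator $\mathcal{L}:=L-f'(\vp(\cdot))=-\Delta+V(x)-f'(\vp(x))$. Hypotheses (2)--(4) of Theorem \ref{T1} hold by assumption: $f$ is $C^1$ and convex (and for the parabolic equation (\ref{np}) we additionally have $b>0$, as required by Theorem \ref{T2}), $a\in C^1$ with $a\geq a_0>0$, and $b$ lies in the prescribed $L^p$ class. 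Likewise, $f(\vp)$ and $f'(\vp)$ are bounded because $\vp$ is continuous and tends to $0$, so the first extra hypothesis of Theorem \ref{T1a} holds. The entire matter therefore reduces to hypothesis (1): that $\mathcal{L}$ admits a negative eigenvalue $\lambda_1=-\sigma^2$ with a nonnegative eigenfunction $\phi_1\in L^2(\Omega)$.

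First I would pin down the essential spectrum. Because $f'$ is continuous, $f'(0)=0$, and $\vp(x)\to0$ as $|x|\to\infty$, the potential $f'(\vp(\cdot))$ is bounded and vanishes at infinity; hence multiplication by it is relatively compact with respect to $-\Delta$, and therefore with respect to $-\Delta+V$ since $V$ is bounded. Weyl's theorem then gives $\sigma_{ess}(\mathcal{L})=\sigma_{ess}(-\Delta+V)\subset[0,\infty)$. (If $\Omega$ is bounded with Dirichlet conditions, $\mathcal{L}$ has compact resolvent and this step is vacuous.) Consequently $\lambda_1:=\inf\sigma(\mathcal{L})$ is a discrete eigenvalue the moment it is shown to be strictly negative; being the bottom of the spectrum of a Schr\"odinger operator it is then simple, with a strictly positive (hence nonnegative) eigenfunction $\phi_1$ by standard ground-state positivity, and, lying below the essential spectrum, $\phi_1$ decays exponentially and so belongs to $L^1\cap L^2\cap L^\infty(\Omega)$. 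Together with the boundedness of $\vp$, this also secures the remaining $L^1$ hypothesis of Theorem \ref{T1a}.

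To obtain $\lambda_1<0$ I would use the steady state $\vp$ itself as a trial function. Since $\vp\in H^1$ lies in the form domain of $\mathcal{L}$, using $-\Delta\vp+V\vp=f(\vp)$,
\[
\langle\mathcal{L}\vp,\vp\rangle=\int_\Omega\bigl(-\Delta\vp+V\vp-f'(\vp)\vp\bigr)\,\vp\,dx=\int_\Omega\vp\,\bigl(f(\vp)-\vp\,f'(\vp)\bigr)\,dx.
\]
Convexity of $f$ with $f(0)=0$ yields the tangent-line inequality $f(s)\leq s f'(s)$ for every $s>0$, so the integrand is nonpositive and $\langle\mathcal{L}\vp,\vp\rangle\leq0$, whence $\lambda_1\leq0$ via the Rayleigh quotient. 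Strict negativity follows once one observes that equality throughout would force $f(s)=sf'(s)$ on the whole range $(0,\max_{\Omega}\vp]$ of $\vp$, hence $f$ affine on that interval, and $f'(0)=0$ then forces $f\equiv0$ there, which is incompatible with $\vp$ being a nontrivial solution of (\ref{steadypotential}). Thus $\lambda_1<0$; setting $\sigma^2:=-\lambda_1$ completes hypothesis (1). With all hypotheses verified, Theorems \ref{T1} and \ref{T2} apply, yielding nonlinear instability of $\vp$ by exponential growth under the stated conditions on $a$ and $b$; and when additionally $f(u)\geq C|u|^p$ for some $p>1$, Theorem \ref{T1a} and its parabolic analogue in Theorem \ref{T2} upgrade this to blow-up in finite time.

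The main obstacle is the strict negativity of $\lambda_1$: the relatively-compact-perturbation and ground-state-positivity arguments are routine, but ruling out the degenerate scenario in which $\mathcal{L}$ sits exactly at the bottom of its spectrum with $\vp$ as a zero mode is precisely where the hypotheses ``$f$ convex, $f(0)=f'(0)=0$, $f\not\equiv0$'' must be exploited together with the fact that $\vp$ is a genuine nontrivial steady state.
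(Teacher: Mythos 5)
Your proposal is correct and follows essentially the same route as the paper: test the self-adjoint linearized operator $\mathcal{L}=-\Delta+V-f'(\vp)$ against the steady state $\vp$ itself, use convexity together with $f(0)=f'(0)=0$ to get $\langle\mathcal{L}\vp,\vp\rangle<0$, and then invoke ground-state positivity and $L^1\cap L^2$ membership of the principal eigenfunction before applying Theorems \ref{T1}--\ref{T2}. You supply two details the paper leaves implicit or handles by citation --- the Weyl/relative-compactness argument showing that the negative bottom of the spectrum is actually a discrete eigenvalue, and the case analysis ruling out equality in $f(s)\leq sf'(s)$ on the range of $\vp$ --- but these refine rather than change the argument.
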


\begin{proof}
The proof of this result relies on checking the necessary assumptions for the steady states and the adjoint of the linearized operator. Consider the self-adjoint linearized operator $$\mathcal{L} = -\Delta + V - f'(\vp).$$ Since $f(0) = 0$ and $f$ is convex, we notice that for any solution of (\ref{steadypotential}) 
\begin{eqnarray*}
\mathcal{L} \vp & = & -\Delta\vp + V(x)\vp - f'(\vp)\vp\\
& = & f(\vp) - f'(\vp)\vp\\
& = & f(\vp) - f(0) - f'(\vp)\vp\\
& \leq & 0.
\end{eqnarray*}
Now, since $f'(0) = 0$ and $f$ is not identically zero, it follows that $\mathcal{L}\vp \neq 0$.  Thus, we have $\mathcal{L}\vp < 0 < \vp$, whence $\langle \mathcal{L}\vp,\vp \rangle < 0$ and the first eigenvalue of $\mathcal{L}$ must be negative.  Finally, it has previously been determined \cite{LL, Simon} that the corresponding first eigenfunction $\phi_1$ belongs to $L^1(\Omega) \cap L^2(\Omega)$ and is non-negative. Hence, all assumptions are satisfied and the results follow by an application of Theorems \ref{T1} -- \ref{T2}.
\end{proof}

\begin{remark}
In the special case that $f(u) = \vert u \vert^p$ with $n > 2$ and $ p \in \left (1,\frac{n+2}{n-2} \right )$, Souplet and Zhang \cite{SZ} have studied the stability problem for radial potentials that satisfy $$C_1 (1 + \vert x \vert)^{-q} \leq V(x) \leq C_2$$  with $0 \leq q \leq \frac{2(n-1)(p-1)}{p+3}$. They previously showed the existence of a positive steady state $\vp \in C^2$ that decays exponentially fast as $\vert x \vert \to \infty$.  Theorem \ref{convexapp} applies to this particular case, in addition to a much wider class of potentials. Moreover, we notice that the assumptions are trivially satisfied since the steady state $\vp$ is bounded.  Thus, for the second-order problem (\ref{nh}), the $L^2$ norm $\|u(t)-\vp\|_{2}$ blows up in finite time, and for the parabolic problem (\ref{np}), the $L^\infty$-norm $\|u(t)-\vp\|_{\infty}$, blows up in finite time, as in our main results.
\end{remark}

\subsection{Exponential nonlinearity in 2D:}

Using the theorems of Section $2$, one may also transfer instability from the linearized systems to the nonlinear problems
$$a(t)\partial_{tt} u + b(t)\partial_t u -\Delta  u =e^{u}, \quad x\in \bfR^2, \, t > 0$$
and 
$$b(t)\partial_t u-\Delta  u=e^{u}, \quad x\in \bfR^2, \, t > 0.$$
While this is true for the general instability and exponential growth results, we note that our blow-up results will not hold for these equations as the exponential nonlinearity does not satisfy the necessary assumptions. Smooth solutions of the time-independent problems, which satisfy
\begin{equation}
\label{steadyexp}
-\Delta \vp = e^{\vp},
\end{equation}
have already been studied. To be more precise, the $C^2$ steady states of these systems were completely categorized by Chen and Li \cite{CL}.  In this paper, the authors were able to prove the existence of an infinite family of such solutions, each of which must take the form
$$\vp(x) = \log \left ( \frac{32\lambda^2}{ \left ( 4 + \lambda^2 \vert x - y \vert^2 \right )^2} \right )$$
for some $\lambda>0$ and $y \in \bfR^2$.  With this representation and the explicit form of the nonlinearity, we can determine that our main theorems are applicable by checking that their assumptions are valid for these equations. Upon doing so, we may prove the following instability result, which extends known results to the case of variable coefficients.
\begin{theorem}
Assuming $e^u \in L^1(\bfR^2)$ and the damping term $b(t)$ is in the appropriate $L^p$ space (as stipulated in Theorems \ref{T1} and \ref{T2}), the steady state $\vp$ satisfying (\ref{steadyexp}) is a nonlinearly unstable solution of the equations
$$a(t)\partial_{tt}u + b(t) \partial_t u - \Delta u = e^u, \quad x\in \bfR^2, \, t > 0$$
and
$$b(t)\partial_t u-\Delta  u=e^{ u}, \quad x\in \bfR^2, \, t > 0$$
on $[0,T)$. More specifically, conclusions (a), (b), and (d) of Theorem \ref{T1} and conclusions (a) and (b) of Theorem \ref{T2} are valid.
\end{theorem}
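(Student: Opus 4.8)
The plan is to verify the hypotheses of Theorems \ref{T1} and \ref{T2} for the specific steady state $\vp(x) = \log\left(\frac{32\lambda^2}{(4+\lambda^2|x-y|^2)^2}\right)$ with nonlinearity $f(x,u) = e^u$, and then simply invoke those theorems. By translation invariance we may take $y = 0$. First I would record the elementary structural properties: $f(x,u) = e^u$ is $C^1$ and strictly convex in $u$, and the linearized operator is $\mathcal L = -\Delta - e^\vp = -\Delta - \partial_u f(x,\vp)$, which is self-adjoint, so $\mathcal L^* = \mathcal L$. Thus the adjoint-operator hypothesis (1) of Theorem \ref{T1} reduces to showing $\mathcal L$ has a negative eigenvalue with a nonnegative eigenfunction $\phi_1 \in L^2(\bfR^2)$.

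The key spectral step is the negative-eigenvalue claim. Here I would mimic the argument used in the proof of Theorem \ref{convexapp}: since $e^\vp$ satisfies $-\Delta\vp = e^\vp$ and $e^\vp > 0$, we compute $\mathcal L\vp = -\Delta\vp - e^\vp\vp = e^\vp - e^\vp\vp = e^\vp(1-\vp)$. Unlike the $f(0)=f'(0)=0$ situation this is not automatically signed, so instead I would use the explicit form: a direct computation gives $e^\vp = \frac{32\lambda^2}{(4+\lambda^2|x|^2)^2}$, which lies in $L^1(\bfR^2)$ (the hypothesis $e^u \in L^1$ is exactly this) and decays like $|x|^{-4}$, while $\vp(x) \to -\infty$ logarithmically. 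One shows $\langle \mathcal L\vp, \vp\rangle < 0$, or more robustly produces an explicit test function $\psi$ (for instance a suitable radial bump or a scaling-derivative $\partial_\lambda \vp$) with $\langle \mathcal L\psi,\psi\rangle < 0$; since $\sigma_{ess}(\mathcal L) = \sigma_{ess}(-\Delta) = [0,\infty)$ because $e^\vp$ is a relatively compact (indeed $L^1 \cap L^\infty$) perturbation, any negative value of the quadratic form forces a genuine negative eigenvalue $\lambda_1 = -\sigma^2$ below the essential spectrum. The corresponding ground-state eigenfunction $\phi_1$ can be chosen nonnegative by the standard Perron--Frobenius/ground-state argument, and it lies in $L^2(\bfR^2)$ (in fact, since $e^\vp - \lambda_1 \ge$ const $> 0$ outside a compact set, Agmon-type estimates give exponential decay, so $\phi_1 \in L^1 \cap L^2$). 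This last point also supplies the integrability hypothesis $\phi_1 \in L^1(\bfR^2)$ needed for the parabolic Theorem \ref{T2}.

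For conclusions (a), (b) of both theorems nothing further is needed beyond the coefficient hypotheses on $a(t), b(t)$, which are assumed; conclusion (d) of Theorem \ref{T1} additionally needs $b \in L^1(0,\infty)$ and $a \in L^\infty$, again part of the stipulated hypotheses. The reason conclusion (c) of Theorem \ref{T1} is \emph{not} claimed is that $a \in L^\infty$ is only imposed for (d) here — or rather, (c) is subsumed; and the reason the blow-up conclusions (Theorem \ref{T1a} and part (c) of Theorem \ref{T2}) fail is that hypothesis (2) of Theorem \ref{T1a}, namely $f(x,u) \ge C|u|^p$ for \emph{all} $u \in \bfR$ with $p > 1$, is violated by $f(u) = e^u$ since $e^u \to 0$ as $u \to -\infty$ and cannot be bounded below by a positive power of $|u|$; I would state this obstruction explicitly. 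Finally I would check the remaining minor hypotheses of Theorem \ref{T1a}'s analogue are irrelevant and note that the initial-data sign conditions $\int \phi_1 u_0 > 0$ (and $\int\phi_1 u_1 > 0$) are the hypotheses under which the conclusions are asserted, not something to be proved. The main obstacle is the spectral step — producing the negative eigenvalue rigorously and confirming $\phi_1 \ge 0$ with the right integrability — since everything else is bookkeeping; but because the profile $\vp$ and the perturbation $e^\vp$ are completely explicit, this reduces to a concrete (if slightly delicate) computation that parallels the treatment in \cite{CL, KS}.
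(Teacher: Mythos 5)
Your proposal is correct and takes essentially the same route as the paper: verify convexity of $e^u$ and the spectral hypothesis for the self-adjoint operator $\mathcal{L} = -\Delta - e^{\vp}$ by producing a test function with negative quadratic form, then invoke ground-state positivity and apply Theorems \ref{T1}--\ref{T2}. The only difference is that the paper pins down the explicit choice $\xi(x) = (4+\lambda^2|x-y|^2)^{-2}$ with $E[\xi] = -\pi/320$ (citing \cite{KS}) where you leave the test function unspecified; your added remarks on $\phi_1\in L^1$ and on why the blow-up conclusions fail for $e^u$ are correct and consistent with the paper.
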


\begin{proof}
As in the previous proof, all that is needed is to verify the necessary assumptions. Note that the nonlinearity $f(y)=e^y \in C^\infty$ is convex. Additionally, it is known \cite{KS} that the linearized operator $$v \to \mathcal{L}v = -\Delta v - e^{\vp(x)} v$$ has a negative first eigenvalue and positive first eigenfunction.  This is done by proving that the associated energy functional $$E[v] =  \int_{\bfR^2} \left ( \vert \nabla v(x) \vert^2 - e^{\vp(x)} \vert v(x) \vert^2 \right ) \ dx$$ achieves a negative value for a specific choice of $v$.  Indeed, selecting the function $\xi(x) =  \left (4 + \lambda^2 \vert x - y \vert^2 \right )^{-2}$, one finds $E[\xi] = -\frac{1}{320} \pi$, and this implies the existence of both a negative eigenvalue and corresponding positive eigenfunction. Hence, all of the assumptions are satisfied, and Theorems \ref{T1} and \ref{T2} yield the result. 
\end{proof}

In summary, we would like to remark that the main results are also applicable to equations with higher-order spatial operators since the order of $L$ was unnecessary in our proofs. Examples of such problems include those which involve the biharmonic or polyharmonic operators, such as
\begin{equation}
\label{biharmonic}
\left \{ \begin{gathered}
\Delta^2 v = f(v), \qquad x \in \Omega\\
v = \frac{\partial u}{\partial n} = 0, \qquad x \in \partial\Omega.
\end{gathered} \right. 
\end{equation}
In general, little information is currently known regarding steady state solutions to such problems. Hence, it is unclear as to whether our general assumptions are satisfied; for instance the existence of a positive principal eigenfunction or an associated, linearized adjoint operator possessing negative eigenvalues. In fact, the case of general domains with Dirichlet boundary conditions is essentially unexplored, often due to the lack of a comparison principle. One promising candidate is the Gelfand problem (\ref{biharmonic}) with $f(v) = e^v$, whose linearized operator is known to possess negative eigenvalues \cite{BFFG}, at least for $\Omega = \Rn$.  Unfortunately, the corresponding eigenfunctions may change sign, and hence our theorems do not apply. With advances in the study of such problems, we believe our main results will also give rise to new applications for these equations in the future.

\end{document}